\documentclass[11pt,twoside]{article}
\usepackage{amsfonts}
\usepackage{amsmath,amssymb}
\usepackage{multicol}
\usepackage{graphics}
\usepackage{cite}
\usepackage{color} 

\newtheorem{theorem}{Theorem}[section]
\newtheorem{lemma}[theorem]{Lemma}

\newtheorem{example}[theorem]{Example}
\newtheorem{definition}[theorem]{Definition}

\numberwithin{equation}{section}
\newenvironment{proof}[1][Proof]{\noindent\textbf{#1.} }{\hfill $\Box$}

\allowdisplaybreaks \numberwithin{equation}{section}
\makeatletter\setlength{\textwidth}{15.0cm}
\setlength{\oddsidemargin}{1.0cm} \setlength{\evensidemargin}{1.0cm}
\setlength{\textheight}{21.0cm} \pagestyle{myheadings}\markboth{$~$
\hfill {\rm Zhang, Li and Wang} \hfill $~$} {$~$ \hfill
{\rm Entire Solution in an Ignition Asymmetric Nonlocal Equation}\hfill$~$}

\begin{document}
\author{Li Zhang, Wan-Tong Li\thanks{Corresponding author (wtli@lzu.edu.cn).}~ and Zhi-Cheng Wang\\
School of Mathematics and Statistics, Lanzhou University, \\
Lanzhou, Gansu 730000, People's Republic of China}

\title{\textbf{Entire Solution in an Ignition Nonlocal Dispersal Equation: Asymmetric Kernel}}
\date{}
\maketitle

\begin{abstract}
This paper mainly focus on the front-like entire solution of a classical nonlocal dispersal equation with ignition nonlinearity.  Especially, the dispersal kernel function $J$ may not be symmetric here. The asymmetry of $J$ has a great influence on the profile of the traveling waves and the sign of the wave speeds, which further makes the properties of the entire solution more diverse. We first investigate the asymptotic behavior of the traveling wave solutions since it plays an essential role in obtaining the front-like entire solution. Due to the impact of $f'(0)=0$, we can no longer use the common method which mainly depending on Ikehara theorem and bilateral Laplace transform to study the asymptotic rates of the nondecreasing traveling wave and the nonincreasing one tending to 0, respectively, thus we adopt another method to investigate them. Afterwards, we establish a new entire solution and obtain its qualitative properties by constructing proper supersolution and subsolution and by classifying the sign and size of the wave speeds.

\textbf{Keywords}: Entire solution, asymptotic behavior, traveling wave solutions, nonlocal dispersal, asymmetric, ignition.

\textbf{AMS Subject Classification (2000)}: 35K57; 37C65; 92D30.
\end{abstract}


\section{Introduction}
\noindent

In this paper, we are concerned with the following classical nonlocal dispersal equation:
\begin{equation}\label{101}
u_{t}(x,t)=(J*u)(x,t)-u(x,t)+f(u(x,t)), ~~(x,t)\in\mathbb{R}^{2},
\end{equation}
where $(J*u)(x,t)-u(x,t)=\int_{\mathbb{R}}J(y)[u(x-y,t)-u(x,t)]dy$ is a nonlocal dispersal term, and $f$ is an ignition nonlinearity. They satisfy
\begin{description}
\item[(J1)] $J\in C(\mathbb{R})$, $J(x)\geq0$, $\int_{\mathbb{R}}J(x)dx=1$ and $\exists~\lambda>0$ such that $\int_{\mathbb{R}}J(x)e^{\lambda |x|}dx<\infty$.
 \item[(FI)] $f\in C^{2}(\mathbb{R})$, $f(0)=f(1)=0$, $f'(1)<0$, and there exists $\rho\in(0,1)$ such that
$f|_{[0,\rho]}\equiv0, f|_{(\rho,1)}>0$.
\end{description}

It is well known that the long range dispersal phenomenon is very widespread in natural life, thus in recent years,  nonlocal equations and systems have been widely concerned and studied by more and more researchers. Propagation dynamics is one of the hot research topics, more precisely, it includes  traveling wave solution, spreading speed, entire solution and so on.
Especially, entire solution has been widely studied because of its important theoretical and practical significance.  From the perspective of dynamical system, the solution of initial value problem can be seen as a semi-flow or a half-orbit, so we can only determine the state of the solution in $t\in[0,+\infty)$. But an entire solution can be viewed as a full-flow or a full-orbit, it allows us to study the information of the solution in any space point $x\in\mathbb{R}$ and any time $t\in\mathbb{R}$, which further helps us to understand the transient dynamics and the structure of the global attractors. For more rich entire solutions of classical Laplace diffusion and nonlocal dispersal equations and systems, one can refer to \cite{Hamel,GM,Crooks,LSW,li2008,LZZ,liu1,MN,Pan2009,wang2010,WLR2016,WR,ZLW,SLW} and references cited therein.

Note that above results for nonlocal equations and systems are all based on the symmetry of the kernel function. However, as we mentioned in the previous works \cite{SZLW, ZLWS}, the asymmetric dispersal phenomenon is very common and widespread in reality since there is a formal analogy between asymmetric-nonlocal-dispersal equations and reaction-advection-diffusion equations \cite{Coville1,SZLW}. For the entire solutions of reaction-advection-diffusion equations, we refer to \cite{Crooks,liu1} and references therein.

In \cite{SZLW, ZLWS}, we have investigated the entire solutions of the asymmetric dispersal equation \eqref{101} with monostable and bistable nonlinearities, respectively. Compared with the symmetric case \cite{LSW, SLW}, we found that it occurs new types of entire solutions due to the influence of the asymmetry of the kernel function. In addition, catching up phenomenon between traveling wave solutions is very common and qualitative properties of some entire solutions also become very different.

In this paper, we focus our attention on the entire solution of \eqref{101} with asymmetric kernel function and ignition nonlinearity since there is no result on this issue. We use front-like entire solution to describe the propagation of flame since different types of entire solutions corresponding to different ways of flame propagation. What should be pointed out is that the key step to construct such entire solution which behaves like interactions of different traveling waves is having a precise information on the asymptotic behaviors of the traveling wave solutions at infinity. Therefore, we take half the length of this paper to investigate them.

A traveling wave solution of \eqref{101} is a special translation invariant solution of the form $u(x,t)=\phi(\xi) (\xi=x+ct)$ which satisfies
\begin{equation}\label{102}
\begin{cases}
c\phi'(\xi)=J*\phi(\xi)-\phi(\xi)+f(\phi(\xi)),\\
\phi(-\infty)=0,\quad \phi(+\infty)=1,
\end{cases}
\end{equation}
where $\phi(\pm\infty)$ denotes the limit of $\phi(\xi)$ as $\xi\rightarrow\pm\infty$. For asymmetric kernel function $J$ and bistable or ignition nonlinearity, Coville \cite{Coville3} has proved the existence of the traveling wave solution $\phi$ and the uniqueness of the speed $c$. The specific results can be stated as follows. About the traveling wave solutions of the asymmetric equation \eqref{101} with monostable nonlinearity, one can refer to  Coville et al. \cite{Coville1}, Sun et al. \cite{SLW2} and Yagisita \cite{Ya2}.

\begin{theorem}{\rm(\cite{Coville3})}\label{th1}
Assume that $J$ satisfies the assumption (J1) and let $f$ be of bistable or ignition type. Then there exists a constant $c\in\mathbb{R}$ and a nondecreasing function $\phi$ such that $(\phi,c)$ is a solution of \eqref{102}. Moreover the speed $c$ is unique.
\end{theorem}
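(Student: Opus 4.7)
The plan is to reduce the ignition (or bistable) case to the already-understood monostable setting by a vanishing-perturbation argument, so that the existence results referenced in the paper (Coville et al., Yagisita) can be used off-the-shelf. Concretely, I would choose a smooth $g\in C^{2}(\mathbb{R})$ with $g(0)=g(1)=0$, $g>0$ on $(0,1)$, $g'(0)>0$, and set $f_{\varepsilon}(u)=f(u)+\varepsilon g(u)$. Then $f_{\varepsilon}$ is monostable (KPP-like near $0$), so by the monostable theory there is a nondecreasing traveling wave $(\phi_{\varepsilon},c_{\varepsilon})$ for \eqref{102} with nonlinearity $f_{\varepsilon}$, normalized by $\phi_{\varepsilon}(0)=\tfrac{1+\rho}{2}$.

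The next step is to extract uniform bounds on $c_{\varepsilon}$. An upper bound comes from the exponential moment hypothesis in (J1): testing the wave equation against $e^{-\lambda\xi}$ and using $f_{\varepsilon}'(0^{+})=\varepsilon g'(0)$ yields $c_{\varepsilon}\leq \frac{1}{\lambda}\bigl(\int J(y)e^{-\lambda y}dy-1+\varepsilon g'(0)\bigr)$ for admissible $\lambda$, hence $c_{\varepsilon}\leq K$. For the lower bound one uses that a nondecreasing wave cannot propagate faster than the invasion by $1$ from the right, combined with comparison against a subsolution supported where $f\geq \delta>0$. With these bounds and the uniform monotonicity of $\phi_{\varepsilon}$, a Helly selection argument extracts a subsequence converging pointwise to a nondecreasing $\phi:\mathbb{R}\to[0,1]$ with $c_{\varepsilon}\to c\in\mathbb{R}$. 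Dominated convergence (justified by $J\in L^{1}$ and $0\leq\phi_{\varepsilon}\leq 1$) passes to the limit in the integral term, and standard regularization (convolution with $J$) gives $\phi\in C^{1}$ solving the integro-differential equation in \eqref{102}.

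For the boundary conditions, the monotone limits $\phi(\pm\infty)$ exist and, by dominated convergence in the equation, must be zeros of $f$; the normalization $\phi(0)=\tfrac{1+\rho}{2}\in(\rho,1)$ rules out the trivial constants $0$ and $1$, forcing $\phi(-\infty)\in\{0\}$ and $\phi(+\infty)\in\{1\}$ via the structure of $f$ (for ignition, $f\equiv 0$ on $[0,\rho]$ means the only other candidate limit above the normalization is $1$, and below it is $0$). For uniqueness of the speed I would use a sliding argument: given two waves $(\phi_{1},c_{1})$ and $(\phi_{2},c_{2})$ with $c_{1}<c_{2}$, translate $\phi_{2}$ far to the right so that $\phi_{2}(\cdot+\tau)\geq\phi_{1}(\cdot)$ initially, then compare via the nonlocal parabolic comparison principle applied to the corresponding solutions of \eqref{101}; the gap $c_{2}-c_{1}>0$ forces the ordered pair to drift apart, contradicting the preservation of the initial ordering over all time.

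The main obstacle is the third step above: because $J$ is asymmetric, the symmetry-based monotone-iteration schemes used in the classical case are not available, so one must handle passage to the limit and rule out the degenerate profiles $\phi\equiv 0,\rho,1$ using only the comparison principle and the precise ignition structure. A delicate point is that the ignition feature $f|_{[0,\rho]}\equiv 0$ makes the wave equation \emph{degenerate} on the left tail, so one cannot invoke a strong maximum principle there to preclude $\phi\equiv\rho$ to the left of $0$; this must be ruled out by showing that the pre-limit $\phi_{\varepsilon}$ cannot concentrate its variation on an arbitrarily small interval, which follows from a quantitative lower bound on the width of the wave derived from $\|J\|_{L^{1}}=1$ and the uniform bound on $c_{\varepsilon}$.
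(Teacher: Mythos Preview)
The paper does not prove Theorem~\ref{th1}; it is quoted verbatim as a background result from Coville~\cite{Coville3}, and no argument is supplied. So there is nothing in the paper to compare your proposal against, and you should simply cite \cite{Coville3} rather than attempt a self-contained proof here.

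That said, your sketch has a real gap in the bistable case. If $f$ is bistable, then $f<0$ on some interval $(0,\theta)$, and for small $\varepsilon$ the perturbation $f_{\varepsilon}=f+\varepsilon g$ remains negative there; hence $f_{\varepsilon}$ is \emph{not} monostable and the monostable existence theory you invoke does not apply. The vanishing-perturbation route you describe is tailored to the ignition case (where $f\equiv 0$ on $[0,\rho]$, so $f_{\varepsilon}>0$ on $(0,1)$ as you need). Coville's treatment in \cite{Coville3} handles bistable by a different mechanism (a homotopy/approximation adapted to the sign change of $f$, together with a priori speed bounds coming from integrating the wave equation, as in \eqref{eq3.1}), not by reduction to KPP. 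A second soft spot is your lower bound on $c_{\varepsilon}$: in the asymmetric monostable problem there is a half-line of admissible speeds, and you have not said which wave you select nor why its speed stays bounded below as $\varepsilon\downarrow 0$; the correct choice is the minimal-speed front, and the lower bound should come from an integral identity of the type \eqref{eq3.1} rather than from a vague comparison. Finally, in the ignition case every point of $[0,\rho]$ is a zero of $f$, so ``the only candidate below the normalization is $0$'' is not literally true; you correctly flag this in your last paragraph, but the argument you outline (no concentration of variation) is not quite the standard one---one usually uses that the limiting profile satisfies $c\phi'=J*\phi-\phi$ on the set $\{\phi\le\rho\}$ and then appeals to a Liouville/strong-maximum-principle statement for that linear operator to force $\phi(-\infty)=0$.
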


In virtue of the asymmetry of the kernel function $J$, we can not get a nonincreasing traveling wave solution directly, although we can do this for symmetric case. However, note that $\hat{u}(x,t):=u(-x,t)$ is a solution of the following nonlocal equation
\begin{equation}\label{103}
\hat{u}_{t}(x,t)=\int_{\mathbb{R}}J(y)\hat{u}(x+y,t)dy-\hat{u}(x,t)+f(\hat{u}(x,t))
\end{equation}
whenever $u(x,t)$ is a solution of \eqref{101}. Thus the existence of the nondecreasing traveling wave solution of \eqref{103} from 0 to 1 immediately implies the existence of the nonincreasing traveling wave solution of \eqref{101} from 1 to 0. Therefore, by using a similar method with that in \cite{Coville3}, we obtain a nonincreasing front $\hat{\phi}(\hat{\xi}):=\hat{\phi}(x+\hat{c}t)$ with speed $\hat{c}\in\mathbb{R}$ satisfying
\begin{equation}\label{104}
\begin{cases}
\hat{c}\hat{\phi}'=J*\hat{\phi}-\hat{\phi}+f(\hat{\phi}),\\
\hat{\phi}(-\infty)=1,\quad \hat{\phi}(+\infty)=0.
\end{cases}
\end{equation}
But here, the two waves $\phi(\xi)$ and $\hat{\phi}(\hat{\xi})$ are likely to be no longer symmetric in shape although they always are when the kernel function is symmetric.

In order to construct a proper supersolution and further obtain the existence and qualitative properties of a front-like entire solution of \eqref{101}, we must make clear the asymptotic behaviors of the traveling wave solutions $\phi(\xi)$ and $\hat{\phi}(\hat{\xi})$ at infinity.
As $\xi\rightarrow+\infty$ (or $\hat{\xi}\rightarrow-\infty$), since $f'(1)<0$, we can obtain a precise exponential decay rate of $\phi(\xi)$ (or $\hat{\phi}(\hat{\xi})$) by using similar arguments with those in \cite{ZLWS} for bistable case. However, as $\xi\rightarrow-\infty$ (or $\hat{\xi}\rightarrow+\infty$), due to the effect of $f'(0)=0$, we can not use the same method anymore, so by combining the methods provided in Coville \cite{Coville4} and Zhang \cite{ZGB}, we find another way which mainly based on the constant use of a comparison principle and the construction of appropriate barrier functions. But due to the asymmetry of the kernel function $J$, the details of the proof are very different. Moreover, in order to ensure $J$ satisfies a proper comparison principle, we give the following assumption, for this comment, one can refer to \cite{Coville2, Coville3} for more details.
\begin{description}
\item[(J2)] $\exists$~~$a\leq0\leq b, a\neq b$~~such that~~$J(a)>0$ and $J(b)>0$.
\end{description}
which equivalent to $\text{supp}(J)\cap \mathbb{R}^{+}\neq\varnothing$ and $\text{supp}(J)\cap \mathbb{R}^{-}\neq\varnothing$.

From now on, if there is no special note, we always assume the wave speeds $c\neq0$ and $\hat{c}\neq0$, in Section 2, we will give a special asymmetric kernel function as an example to illustrate that the assumption is reasonable and meaningful (Example \ref{em2.1}).

Now we state our main result.

\begin{theorem}\label{th3}
Assume that J satisfies (J1)-(J2) and \eqref{eq2.4}, f satisfies $\max_{u\in[0,1]}f'(u)<1$ and (FI). Let $\phi(x+ct)$ and $\hat{\phi}(x+\hat{c}t)$ be the monotone solutions of \eqref{102} and \eqref{104} which satisfy \eqref{eq4.1}, respectively. Then for any constant $\theta\in\mathbb{R}$,
\eqref{101} admits an entire solution $u(x,t): \mathbb{R}^{2}\rightarrow[0,1]$ satisfying
\begin{equation}\label{105}
\lim_{t\rightarrow-\infty}\left\{\sup_{x\leq-\frac{c+\hat{c}}{2}t}|u(x,t)-\hat{\phi}(x+\hat{c}t-\theta)|
+\sup_{x\geq-\frac{c+\hat{c}}{2}t}|u(x,t)-\phi(x+ct+\theta)|\right\}=0.
\end{equation}
And there exist positive constants $C_{1}$ and $C_{2}$ such that for any $\eta>0$,
\begin{equation}\label{106}
|u(x+\eta,t)-u(x,t)|\leq C_{1}\eta,~~~
\left|\frac{\partial u}{\partial t}(x+\eta,t)-\frac{\partial u}
{\partial t}(x,t)\right|\leq C_{2}\eta.
\end{equation}
In addition, $u(x,t)$ possesses different properties according to the sign and size of the speeds $c$ and $\hat{c}$ (see Lemma \ref{lem2.5}):
\begin{description}
\item[(a)] If $c>0, \hat{c}>0$, then $u(x,t)$ is increasing with respect to $\theta$ and for some constants $a,N, t_{0}\in\mathbb{R}$,
\begin{align}
&\lim_{\theta\rightarrow+\infty}u(x,t)=1  \text{~~uniformly for~~} (x,t)\in[-a,+\infty)^{2}\cup(-\infty,a]^{2},\label{eq109}\\
&\lim_{x\rightarrow+\infty}\sup_{t\geq t_{0}}|u(x,t)-1|=0,~~~
\lim_{x\rightarrow-\infty}\sup_{t\leq t_{0}}|u(x,t)-1|=0,\label{107}\\
&\lim_{t\rightarrow+\infty}\sup_{x\in[-N,+\infty)}|u(x,t)-1|=0.\label{108}
\end{align}
Moreover, the traveling wave solutions $\phi$ and $\hat{\phi}$ propagating in the same direction and from left of the x-axis as $t\rightarrow-\infty$. Since $c>\hat{c}$ (Lemma 2.5), the two waves getting closer and closer as time goes on, and finally $\phi$ is likely to catch up $\hat{\phi}$.
\item[(b)] If $c<0, \hat{c}<0$, then $u(x,t)$ is increasing with respect to $\theta$ and
\begin{align*}
&\lim_{\theta\rightarrow+\infty}u(x,t)=1  \text{~uniformly for~} (x,t)\in[-a,+\infty)\times(-\infty,a]\cup(-\infty,a]\times[-a,+\infty),\\
&\lim_{x\rightarrow+\infty}\sup_{t\leq t_{0}}|u(x,t)-1|=0,~~~
\lim_{x\rightarrow-\infty}\sup_{t\geq t_{0}}|u(x,t)-1|=0,\\
&\lim_{t\rightarrow+\infty}\sup_{x\in(-\infty,N]}|u(x,t)-1|=0.
\end{align*}
Similar statement to (a) also hold for this case, more precisely, the two waves $\phi$ and $\hat{\phi}$ propagating in the same direction and from right of the x-axis as $t\rightarrow-\infty$, and  they getting closer and closer as time goes on, and finally $\hat{\phi}$ is likely to catch up $\phi$.
\item[(c)] If $c>0>\hat{c}$, then $\frac{\partial u}{\partial t}(x,t)>0$ and $u(x,t)$ is increasing with respect to $\theta$ and
\begin{align*}
&\lim_{\theta\rightarrow+\infty}u(x,t)=1  \text{~~uniformly for~~} (x,t)\in[-a,+\infty)^{2}\cup(-\infty,a]\times[-a,+\infty),\\
&\lim_{x\rightarrow\pm\infty}\sup_{t\geq t_{0}}|u(x,t)-1|=0,~~~
\lim_{t\rightarrow+\infty}\sup_{x\in\mathbb{R}}|u(x,t)-1|=0.
\end{align*}
Here the two waves $\phi$ and $\hat{\phi}$ propagating from the opposite ends of the x-axis as $t\rightarrow-\infty$, and are likely to merging with each other eventually.
\end{description}
\end{theorem}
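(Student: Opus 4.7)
The plan is classical but requires delicate adjustments for the asymmetric ignition setting: construct the entire solution as the limit, as $n\to\infty$, of solutions $u_n(x,t)$ of the Cauchy problem for \eqref{101} with initial datum prescribed at $t=-n$ and sandwiched between an explicit sub- and super-solution built from the two waves. Define
\[
\underline{u}(x,t) := \max\{\phi(x+ct+\theta),\,\hat\phi(x+\hat c\, t-\theta)\},
\]
\[
\overline{u}(x,t) := \min\bigl\{1,\ \phi(x+ct+\theta)+\hat\phi(x+\hat c\, t-\theta) + q(t)\bigr\},
\]
where $q(t)=Me^{\omega t}$ for suitable $M,\omega>0$. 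The subsolution property is immediate since both $\phi$ and $\hat\phi$ are exact solutions and $f\geq 0$ on $[0,1]$. Because (J2) ensures a strong comparison principle, for each $n$ the solution $u_n$ with $u_n(\cdot,-n)=\underline{u}(\cdot,-n)$ satisfies $\underline{u}\leq u_n\leq\overline{u}$ on $\mathbb{R}\times[-n,\infty)$; monotonicity of $u_n$ in $n$ (again by comparison) and a diagonal extraction yield the entire solution $u=\lim_{n\to\infty} u_n$ taking values in $[0,1]$.

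The main obstacle is verifying $\mathcal{L}\overline{u} := \overline{u}_t - J\ast\overline{u} + \overline{u} - f(\overline{u})\geq 0$ in the unsaturated region. Subtracting the traveling-wave equations satisfied by $\phi$ and $\hat\phi$ gives
\[
\mathcal{L}\overline{u} = q'(t) + f(\phi)+f(\hat\phi) - f(\phi+\hat\phi+q(t)),
\]
so the inequality reduces to bounding the right-hand side from below. On $\{\phi+\hat\phi+q\leq\rho\}$ the expression vanishes identically, thanks to $f\equiv 0$ on $[0,\rho]$. Near the state $1$, where both waves approach $1$ with an exponential rate governed by $f'(1)<0$, one controls the difference using $f'(1)<0$ together with the hypothesis $\max f'\leq 1$. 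In the transitional region one splits according to which wave is larger and exploits the precise asymptotic behavior established earlier in the paper: the exponential decay at the $1$-side comes from $f'(1)<0$, while the slow decay at the $0$-side forced by $f'(0)=0$ is what makes this step much more intricate than in the bistable case. Asymmetry of $J$ merely replaces symmetric bounds by two-sided ones; taking $\omega$ below the slowest decay rate and $M$ large enough then absorbs the remaining cross terms.

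With existence in hand, the asymptotic formula \eqref{105} follows from the sandwich: on $\{x\geq -\tfrac{c+\hat c}{2}t\}$ the wave $\phi(x+ct+\theta)$ dominates while $\hat\phi(x+\hat c\, t-\theta)$ and $q(t)$ both vanish uniformly as $t\to-\infty$, and the symmetric argument handles the other half-plane. The Lipschitz estimates \eqref{106} come from translating $u$ in space (which yields another solution), comparing with $u\pm C\eta$, and using uniform boundedness of $\phi'$, $\hat\phi'$; the time-Lipschitz estimate then follows by differentiating \eqref{101} and applying the spatial estimate together with boundedness of $f'$.

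Monotonicity of $u$ in $\theta$ is inherited from the sub-/super-solutions and preserved under the comparison-limit construction. The qualitative statements (a)--(c) are then read off by examining the geometry of the interfaces $x=-ct$ and $x=-\hat c\, t$ relative to the crossover line $x=-\tfrac{c+\hat c}{2}t$, combined with standard spreading properties of \eqref{101} on appropriately translated domains. In case (c), where $c>0>\hat c$, one additionally establishes $u_t>0$: the difference $u(x,t+h)-u(x,t)$ is a nonnegative solution of the linearisation of \eqref{101}, and a strong maximum principle (again secured by (J2)) upgrades nonnegativity to strict positivity. I expect the construction and verification of $\overline{u}$ in the transitional regime, where the asymmetry of $J$ interacts with the $f'(0)=0$ asymptotics, to be the principal technical hurdle; the qualitative case analysis is comparatively mechanical once the existence and sandwich bounds are in place.
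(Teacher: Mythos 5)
The overall architecture of your argument matches the paper's (Cauchy problems launched at $t=-n$ from a subsolution $\max\{\phi,\hat\phi\}$, sandwiching by a supersolution, passing to the limit, then translating in $(x,t)$ to reach a general $\theta$), but your supersolution is not the one the paper uses and I do not believe it can be verified. You propose $\overline{u}=\min\bigl\{1,\phi(x+ct+\theta)+\hat\phi(x+\hat c t-\theta)+q(t)\bigr\}$ with a purely time--dependent correction $q(t)=Me^{\omega t}$, which reduces the check to $q'(t)\geq f(\phi+\hat\phi+q)-f(\phi)-f(\hat\phi)$. Split the right side as $\bigl[f(\phi+\hat\phi+q)-f(\phi+\hat\phi)\bigr]+\bigl[f(\phi+\hat\phi)-f(\phi)-f(\hat\phi)\bigr]$. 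At points where the front $\phi$ sits in the ``hump'' of $f$ (so $\phi\in(\rho,1-m_0)$, $\hat\phi$ exponentially small, $f(\hat\phi)=0$), the first bracket is $\approx f'(\phi)\,q$, whose coefficient can be as large as $K:=\max_{[0,1]}f'>0$. Since $q'=\omega q$, dominating this term forces $\omega>K$. But the second bracket, bounded by $\|f''\|_\infty\,\phi\hat\phi\sim e^{2\sigma c_0 t}$ with $\sigma=\min\{\mu_1,\hat\mu_1\}$, forces $\omega\leq 2\sigma c_0$ for the inequality to survive as $t\to-\infty$. The hypotheses give $K<1$ but say nothing about $K$ versus $2\sigma c_0$, so there is no admissible $\omega$ in general; this is the step I believe genuinely fails. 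The issue is structural: $q'(t)$ is spatially uniform, whereas the obstruction lives in a thin strip around the interface of one wave where the other is already tiny.

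The paper sidesteps this with a \emph{shifted} supersolution $\bar u(x,t)=\phi(x+\bar c t+p(t))+\hat\phi(x+\bar c t-p(t))$, $\bar c=\frac{c+\hat c}{2}$, where $p$ solves the ODE \eqref{eq4.5}. The helpful term coming from the shift is $(p'-c_0)(\phi'-\hat\phi')=Ne^{\sigma p(t)}(\phi'-\hat\phi')$, which is \emph{spatially localized} precisely where the cross term $f(\phi+\hat\phi)-f(\phi)-f(\hat\phi)$ lives; moreover there is no extra $+q$ inside the argument of $f$, so the cross term stays bounded by $L\phi\hat\phi$. Condition \eqref{eq4.1} ($\phi'\geq k\phi$ for $\xi\leq0$, $-\hat\phi'\geq\hat k\hat\phi$ for $\hat\xi\geq0$) is exactly what lets one compare $\phi'-\hat\phi'$ against $\phi\hat\phi$ pointwise, making the ratio controllable by $N$ after choosing $\sigma=\min\{\mu_1,\hat\mu_1\}$; your sketch never invokes \eqref{eq4.1} and so misses this mechanism. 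If you want to salvage the additive ansatz you would need to tie $q$ to the profiles (e.g.\ something localized like $q(t)\phi'$ or $q(t)\hat\phi'$), at which point you essentially reinvent the shift. The rest of your outline (Lipschitz estimates via Lemma~\ref{lem2.3}, monotonicity in $\theta$, the case analysis (a)--(c), strict $u_t>0$ via the strong maximum principle in case (c)) is consistent with the paper, though the paper passes to a limit via Arzel\`a--Ascoli rather than via monotonicity of $u_n$ in $n$---both are fine.
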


The remainder of this paper is organized as follows. In Section 2, we give some useful lemmas which mainly focus on the initial value problem of \eqref{101} and the speeds of traveling wave solutions. Sections 3 and 4 respectively devote to prove the exponential behaviors of traveling wave solutions and the existence and qualitative properties of entire solution of \eqref{101}.

\section{Preliminaries}
\noindent

In this section, we will make some preparations for getting our main results later. Since the main theorem is proved by considering the solving sequences of Cauchy problems starting at time $-n$ with suitable initial values, we first consider the following Cauchy problem of \eqref{101}:
\begin{equation}\label{201}
\left\{
\begin{aligned}
&\frac{\partial u(x,t)}{\partial t}=(J*u)(x,t)-u(x,t)+f(u(x,t)),
&&x\in\mathbb{R}, t>0,\\
&u(x,0)=u_{0}(x), &&x\in\mathbb{R}.
\end{aligned}
\right.
\end{equation}

\begin{definition}\label{def2.1}
A function $\bar{u}(x,t)$ is called a supersolution of \eqref{101}
on $(x,t)\in\mathbb{R}\times[\tau,T)$, $\tau<T$, if
$\bar{u}(x,t)\in C^{0,1}(\mathbb{R}\times[\tau,T),\mathbb{R})$ and
satisfies
\begin{equation}\label{202}
\frac{\partial}{\partial t}\bar{u}(x,t)\geq (J*\bar{u})(x,t)-\bar{u}(x,t)
+f(\bar{u}(x,t)),~~~~\forall~(x,t)\in\mathbb{R}\times[\tau,T).
\end{equation}
Furthermore, if for any $\tau<T$, $\bar{u}$ is a supersolution of \eqref{101} on $(x,t)\in\mathbb{R}\times[\tau,T)$, then $\bar{u}$ is called a supersolution of \eqref{101} on $(x,t)\in\mathbb{R}\times(-\infty,T)$. Similarly, a subsolution $\underline{u}(x,t)$ can be defined by reversing the
inequality \eqref{202}.
\end{definition}

\begin{lemma}\label{lem2.2}
Assume (J1)-(J2) and (FI) hold. Then\\
{\rm(i)} For any $u_{0}(x)\in C(\mathbb{R},[0,1])$, \eqref{201} admits a unique solution $u(x,t;u_{0})\in C^{0,1}(\mathbb{R}\times[0,\infty),[0,1])$.\\
{\rm(ii)} For any pair of supersolution $\bar{u}(x,t)$ and subsolution
$\underline{u}(x,t)$ of \eqref{101} on $\mathbb{R}\times[0,+\infty)$
with $\underline{u}(x,0)\leq\bar{u}(x,0)$ and $0\leq\underline{u}(x,t),
\bar{u}(x,t)\leq1$ for $(x,t)\in\mathbb{R}\times[0,+\infty)$, there
holds $0\leq\underline{u}(x,t)\leq\bar{u}(x,t)\leq1$ for all
$(x,t)\in\mathbb{R}\times[0,+\infty)$.
\end{lemma}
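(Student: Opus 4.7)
For part (i), my plan is to recast the Cauchy problem \eqref{201} as a fixed-point equation and appeal to the Banach contraction principle. I will first shift the nonlinearity: fix $M > \max_{u \in [0,1]} |f'(u)|$ and set $g(u) := f(u) + Mu$, which is nondecreasing on $[0,1]$. Rewriting \eqref{201} as $u_{t} + (1+M)u = J*u + g(u)$ and integrating with the exponential factor yields the operator
\begin{equation*}
G[u](x,t) := e^{-(1+M)t}\,u_{0}(x) + \int_{0}^{t} e^{-(1+M)(t-s)}\bigl\{(J*u)(x,s) + g(u(x,s))\bigr\}\,ds.
\end{equation*}
Because $J\geq 0$ and $g$ is nondecreasing, $G$ is order-preserving, and a short computation (using $f(0)=f(1)=0$, hence $g(1)=M$) shows $G[0]\equiv 0$ and $G[1]\equiv 1$. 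Consequently $G$ maps $X_{T_{0}} := C(\mathbb{R}\times [0,T_{0}];[0,1])$ into itself. A standard estimate based on the Lipschitz constant of $g$ will give a $T_{0}>0$ for which $G$ is a sup-norm contraction, and Banach delivers a unique fixed point, i.e.\ the solution on $[0,T_{0}]$. Iterating on successive windows of length $T_{0}$ extends the solution uniquely to all of $\mathbb{R}\times[0,\infty)$. For the $C^{0,1}$ regularity I will read off continuity in $x$ from the fact that $J\in C(\mathbb{R})\cap L^{1}(\mathbb{R})$ makes $J*u$ continuous in $x$ for bounded $u$ (so continuity propagates through the Picard iteration), and Lipschitz continuity in $t$ comes directly from the equation since its right-hand side is uniformly bounded.

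For part (ii), I will multiply the super/sub-solution inequalities by $e^{t}$ and integrate to obtain
\begin{align*}
\bar{u}(x,t) &\geq e^{-t}\bar{u}(x,0) + \int_{0}^{t} e^{-(t-s)}\bigl\{(J*\bar{u})(x,s) + f(\bar{u}(x,s))\bigr\}\,ds,\\
\underline{u}(x,t) &\leq e^{-t}\underline{u}(x,0) + \int_{0}^{t} e^{-(t-s)}\bigl\{(J*\underline{u})(x,s) + f(\underline{u}(x,s))\bigr\}\,ds.
\end{align*}
Setting $w := \bar{u}-\underline{u}$ and $w^{-} := \max(-w,0)$, I will use the elementary bounds $f(\bar{u})-f(\underline{u}) \geq -L\,w^{-}$ with $L := \max_{[0,1]}|f'|$ (valid for both signs of $w$), together with $J*w \geq -J*w^{-}$, to deduce a one-sided integral inequality whose negative part reads
\begin{equation*}
w^{-}(x,t) \leq \int_{0}^{t}\bigl\{(J*w^{-})(x,s) + L\,w^{-}(x,s)\bigr\}\,ds.
\end{equation*}
Taking suprema in $x$ and setting $V(t) := \sup_{x}w^{-}(x,t)$, the convolution is controlled by $\|J\|_{L^{1}}V(s) = V(s)$, so $V(t) \leq (1+L)\int_{0}^{t}V(s)\,ds$; Gronwall then forces $V\equiv 0$, i.e.\ $\underline{u}\leq \bar{u}$ everywhere.

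The main technical hurdle is part (ii): since the problem is nonlocal in $x$ and posed on the whole line, the classical pointwise maximum-principle argument is not directly available (the infimum of $w$ need not be attained and the equation has no spatial smoothing to fall back on). The workaround is to carry the comparison entirely through the integral formulation and to close it with a Gronwall estimate on $\|w^{-}(\cdot,t)\|_{\infty}$; the order-preserving shift $g=f+Mu$ introduced in part (i) is the same device that lets this machinery apply uniformly in both parts.
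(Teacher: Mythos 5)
The paper itself omits the proof and refers to standard references, so the comparison is with the customary argument. Your part~(i) is a correct rendition of that argument: the monotone shift $g=f+Mu$, the exponentially weighted integral operator $G$, order preservation plus $G[0]=0$, $G[1]=1$ to pin down the invariant range, Banach fixed point locally in time, and then continuation. Part~(ii), however, contains a genuine error in the key estimate. The inequality $f(\bar{u})-f(\underline{u})\geq -L\,w^{-}$ with $w=\bar{u}-\underline{u}$ is \emph{not} ``valid for both signs of $w$'': when $w\geq 0$ it asserts $f(\bar{u})\geq f(\underline{u})$, i.e.\ monotonicity of $f$, which fails for the ignition nonlinearity precisely because $f'(1)<0$ forces $f$ to be decreasing near $u=1$. (Take $\underline{u}$ slightly below $1$ and $\bar{u}=1$; then $w>0$, $w^{-}=0$, but $f(\bar{u})-f(\underline{u})=-f(\underline{u})<0$.) Consequently the one-sided inequality $w^{-}(x,t)\leq\int_{0}^{t}\{J*w^{-}+Lw^{-}\}\,ds$ does not follow as written.

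The fix is the same device you invoke at the very end but do not actually apply in the displayed derivation: run the entire part~(ii) computation with the integrating factor $e^{(1+M)t}$, so that the sub/super-solution inequalities become
\begin{equation*}
\bar{u}(x,t)\ \geq\ e^{-(1+M)t}\bar{u}(x,0)+\int_{0}^{t}e^{-(1+M)(t-s)}\bigl\{(J*\bar{u})(x,s)+g(\bar{u}(x,s))\bigr\}\,ds,
\end{equation*}
and analogously with $\leq$ for $\underline{u}$, where $g(u)=f(u)+Mu$ is nondecreasing on $[0,1]$. Then the bound $g(\bar{u})-g(\underline{u})\geq -L'\,w^{-}$ with $L'=\mathrm{Lip}(g)$ is genuinely valid for both signs of $w$ (it is $\geq 0$ when $w\geq 0$ by monotonicity, and $\geq -L'|w|=-L'w^{-}$ when $w<0$ by the Lipschitz bound), and the Gronwall closure on $V(t):=\sup_{x}w^{-}(x,t)$ goes through exactly as you outline, with the harmless change that the constant $1+L$ becomes $1+L'$. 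In short, the monotone shift is not an optional convenience but is the load-bearing ingredient of part~(ii); once it is carried through the displayed estimates, the argument is sound and matches the standard proof the paper points to.
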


\begin{lemma}\label{lem2.3}
Assume (J1) and (FI) hold. Let $u(x,t)$ be a solution of \eqref{201} with $u_{0}(x)\in C(\mathbb{R},[0,1])$, then there exists a constant $K_{0}>0$, independent of $x,t$ and $u_{0}(x)$, such that
$$
|u_{t}(x,t)|,~ |u_{tt}(x,t)|\leq K_{0}  \text{~~for any~~} x\in\mathbb{R},~t>0.
$$
In addition, if we further assume $\max_{u\in[0,1]}f'(u)<1$ and there exists a constant $K_{1}>0$ such that for any $\eta>0$,
\begin{equation}\label{eq2.4}
\int_{\mathbb{R}}|J(x+\eta)-J(x)|dx\leq K_{1}\eta,
\end{equation}
and
\begin{equation}\label{2.4}
|u_{0}(x+\eta)-u_{0}(x)|\leq K_{1}\eta,
\end{equation}
then for any $x\in\mathbb{R}$, $t>0$ and $\eta>0$, one has
\begin{align}
|u(x+\eta,t)-u(x,t)|\leq K_{2}\eta,~~~
\left|\frac{\partial u}{\partial t}(x+\eta,t)-\frac{\partial u}
{\partial t}(x,t)\right|\leq K_{2}\eta, \label{2.5}
\end{align}
where $K_{2}$ is some positive constant independent of $u_{0}$ and $\eta$.
\end{lemma}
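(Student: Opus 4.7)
The plan is to establish the three estimates in sequence, each by combining the differential equation \eqref{201} with the facts that $u\in[0,1]$ and $\int J=1$. The bounds on $u_t$ and $u_{tt}$ follow directly from \eqref{201}: since $|J*u-u|\le 2$ and $|f(u)|$ is bounded on $[0,1]$, one reads off $|u_t|\le K_0$, and differentiating \eqref{201} in $t$ yields the linear equation $u_{tt}=J*u_t-u_t+f'(u)u_t$, to which the same type of argument (with $|u_t|\le K_0$ and $|f'|$ bounded on $[0,1]$) gives a pointwise bound on $|u_{tt}|$.

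For the spatial Lipschitz estimate, I would set $v_\eta(x,t):=u(x+\eta,t)-u(x,t)$. Subtracting the equations at $x+\eta$ and $x$ and applying the mean value theorem to $f$ yields, at each fixed $x$, the scalar linear ODE in $t$
\begin{equation*}
\partial_t v_\eta(x,t)+\bigl(1-f'(\xi(x,t))\bigr)v_\eta(x,t)=h(x,t),
\end{equation*}
where $\xi(x,t)$ lies between $u(x,t)$ and $u(x+\eta,t)$ and $h(x,t)=\int_{\mathbb R}[J(x+\eta-y)-J(x-y)]u(y,t)\,dy$. Assumption \eqref{eq2.4} together with $0\le u\le 1$ gives $|h|\le K_1\eta$, while $\max_{[0,1]}f'<1$ provides positive damping $1-f'(\xi)\ge\mu:=1-\max_{[0,1]}f'>0$ pointwise. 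Variation of constants, combined with $|v_\eta(x,0)|\le K_1\eta$ coming from \eqref{2.4}, then yields
\begin{equation*}
|v_\eta(x,t)|\le K_1\eta\,e^{-\mu t}+\frac{K_1\eta}{\mu}(1-e^{-\mu t})\le K_1(1+\mu^{-1})\eta,
\end{equation*}
uniformly in $x$ and $t$, which is the first estimate in \eqref{2.5}.

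For the time-derivative Lipschitz estimate, I would apply the same idea to $w_\eta(x,t):=u_t(x+\eta,t)-u_t(x,t)$, starting from the differentiated equation above. The analogous subtraction produces $\partial_t w_\eta+(1-f'(u(x+\eta,t)))w_\eta=H(x,t)$, where $H$ combines the convolution difference $\int[J(x+\eta-y)-J(x-y)]u_t(y,t)\,dy$ (bounded by $K_0K_1\eta$ via \eqref{eq2.4} and the pointwise bound on $u_t$) with $[f'(u(x+\eta,t))-f'(u(x,t))]u_t(x,t)$ (bounded by $\max_{[0,1]}|f''|\cdot K_0\cdot K_2\eta$ using $f\in C^2$ and the spatial Lipschitz bound just proved). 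The initial value $|w_\eta(x,0)|$ is $O(\eta)$ by an analogous computation directly on $u_0$, $J*u_0$ and $f(u_0)$, using \eqref{eq2.4} and \eqref{2.4}. Variation of constants, exactly as before, delivers a uniform-in-$(x,t)$ bound of the desired form $|w_\eta(x,t)|\le K_2'\eta$.

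The main obstacle to watch for is the uniformity in $t$: a naive Gronwall estimate applied to the full equation produces Lipschitz constants growing exponentially with $t$. The trick that avoids this is to keep the convolution difference $J*u(x+\eta,\cdot)-J*u(x,\cdot)$ on the right-hand side as a forcing term controlled directly by \eqref{eq2.4}, rather than reabsorbing it as $J*v_\eta$; the nonlocality then disappears from the left, reducing the problem at each fixed $x$ to a scalar ODE whose damping coefficient $1-f'(\xi)\ge\mu>0$, coming from the hypothesis $\max f'<1$, is precisely what yields the time-uniform estimate.
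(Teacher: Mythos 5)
Your argument is correct and coincides with the standard proof that the paper invokes by reference (to \cite{LSW} and \cite{LZZ}) rather than reproducing: read the pointwise bounds on $u_t$ and $u_{tt}$ directly off the equation, then for the spatial Lipschitz estimates write the difference quotient as a scalar ODE in $t$ at each fixed $x$, keeping the convolution difference $J*u(x+\eta,\cdot)-J*u(x,\cdot)$ as a forcing term bounded by \eqref{eq2.4}, so that the uniform damping $1-\max_{[0,1]}f'>0$ delivers a $t$-uniform bound via variation of constants. Your closing remark on why one must not reabsorb the nonlocal term into $J*v_\eta$ correctly identifies the role of the hypothesis $\max_{[0,1]}f'<1$.
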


We omit the details of the proofs of Lemmas \ref{lem2.2} and \ref{lem2.3} because they are standard and common, refer to \cite[Theorem 2.4]{LSW} and \cite[Lemma 3.2]{LZZ}.

Next, we discuss the sign and size of the wave speeds. Due to the asymmetry of the kernel function $J$, the speed $c$ of the nondecreasing traveling wave solution $\phi$ may be nonpositive and the speed $\hat{c}$ of the nonincreasing one $\hat{\phi}$ may be nonnegative, which are very different with the case of the symmetric kernel function. Precisely, integrating the first equation of \eqref{102} from $-\infty$ to $+\infty$, we have
\begin{align}\label{eq3.1}
c&=\int_{\mathbb{R}}\int_{\mathbb{R}}J(y)[\phi(\xi-y)-\phi(\xi)]dyd\xi
+\int_{\mathbb{R}}f(\phi(\xi))d\xi\nonumber\\
&=\int_{\mathbb{R}}\int_{\mathbb{R}}J(y)\int_{0}^{1}\phi'(\xi-\theta y)(-y)d\theta dyd\xi+\int_{\mathbb{R}}f(\phi(\xi))d\xi\nonumber\\
&=-\int_{\mathbb{R}}J(y)ydy+\int_{\mathbb{R}}f(\phi(\xi))d\xi.
\end{align}
Similarly,
\begin{equation}\label{eq302}
\hat{c}=-\int_{\mathbb{R}}J(y)ydy-\int_{\mathbb{R}}f(\hat{\phi}(\hat{\xi}))d\hat{\xi}.
\end{equation}

If $J$ is symmetric, then $\int_{\mathbb{R}}J(y)ydy=0$, it is obvious that $c>0$ and $\hat{c}<0$ since $0\leq\phi(\cdot),\hat{\phi}(\cdot)\leq1$ and $f(u)\geq0$ for $u\in[0,1]$. However, if $J$ is asymmetric, the sign and size of the integral $\int_{\mathbb{R}}J(y)ydy$ can not be determined exactly, thus the signs of speeds $c$ and $\hat{c}$ are uncertain. Actually, from \eqref{eq3.1} and \eqref{eq302}, we know that the signs of $c$ and $\hat{c}$ partially depend on the properties of the kernel function $J$.
More specifically, we will give an example of the asymmetric kernel functions to illustrate such dependence or influence.

\begin{example}\label{em2.1}{\rm
The kernel function given below can make $c\neq0$ and $\hat{c}\neq0$.
\begin{equation*}
J(x)=\left\{
\begin{aligned}
~&\frac{2}{15}e^{-(x-2)}, ~~~~~x\geq2,\\
~&\frac{8}{15}e^{2(x+1)}, ~~~~~~x\leq-1,\\
~&\frac{4}{45}x^{2}-\frac{2}{9}x+\frac{2}{9}, ~~-1<x<2.
\end{aligned}\right.
\end{equation*}
Through some calculations, we verify that $J(x)$ satisfies (J1)-(J2) and $\int_{\mathbb{R}}J(x)xdx=0$, then \eqref{eq3.1} and \eqref{eq302} show that $c\neq0$ and $\hat{c}\neq0$ hold (more precisely, such kernel function can ensure $c>0$ and $\hat{c}<0$). In fact, more generally speaking, in the case of this paper, as long as the kernel function $J(x)$ satisfies (J1)(J2) and $\int_{\mathbb{R}}J(y)ydy\neq\int_{\mathbb{R}}f(\phi(\xi))d\xi$ and $\int_{\mathbb{R}}J(y)ydy\neq\int_{\mathbb{R}}f(\hat{\phi}(\hat{\xi}))d\hat{\xi}$, the wave speeds $c\neq0$ and $\hat{c}\neq0$ are naturally established. One of the simplest case is that $J$ satisfies $\int_{\mathbb{R}}J(y)ydy=0$ besides (J1) and (J2).
}
\end{example}


Now we give a lemma to classify the sigh of the wave speeds.

\begin{lemma}\label{lem2.5}
Assume (J1) and (FI) hold. As for the signs of speeds $c$ and $\hat{c}$, there are only three possibilities:\\
{\rm(i)} $c>0$, $\hat{c}>0$;\\
{\rm(ii)} $c>0$, $\hat{c}<0$;\\
{\rm(iii)} $c<0$, $\hat{c}<0$,\\
which means the case of $c<0, \hat{c}>0$ is impossible. Moreover, $c>\hat{c}$.
\end{lemma}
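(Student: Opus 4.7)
The plan is to prove Lemma~\ref{lem2.5} by a direct comparison of $c$ and $\hat{c}$ using the integral identities \eqref{eq3.1} and \eqref{eq302} that were derived just above the statement. Subtracting \eqref{eq302} from \eqref{eq3.1} cancels the common drift term $-\int_\mathbb{R}J(y)y\,dy$ and leaves
$$c-\hat{c}=\int_\mathbb{R}f(\phi(\xi))\,d\xi+\int_\mathbb{R}f(\hat{\phi}(\hat{\xi}))\,d\hat{\xi}.$$
By (FI), $f\geq 0$ on $[0,1]$ and $f>0$ on $(\rho,1)$. Since $\phi$ is continuous and nondecreasing with $\phi(-\infty)=0$ and $\phi(+\infty)=1$, the preimage $\phi^{-1}((\rho,1))$ is a nonempty open interval on which $f(\phi)>0$; the same holds for $\hat{\phi}$. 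Hence both integrals are strictly positive and $c>\hat{c}$ follows.

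Once $c>\hat{c}$ is in hand, the classification is immediate. The standing assumption $c\neq 0$, $\hat{c}\neq 0$ leaves four a priori sign patterns; the inequality $c>\hat{c}$ rules out exactly the pattern $c<0<\hat{c}$, so the three remaining possibilities are precisely (i), (ii), (iii). This is also the inequality invoked inside case (a) of Theorem~\ref{th3}.

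The only non-trivial point is to verify that the two integrals $\int_\mathbb{R}f(\phi)\,d\xi$ and $\int_\mathbb{R}f(\hat{\phi})\,d\hat{\xi}$ are actually finite, so that the subtraction above is meaningful; this is where I expect the main work to lie. On the half-line where $\phi\leq\rho$, monotonicity of $\phi$ together with the flatness condition $f|_{[0,\rho]}\equiv 0$ from (FI) makes the integrand vanish identically. On the opposite half-line, $\phi(\xi)\to 1$ and the Taylor expansion $f(\phi)=-f'(1)(1-\phi)+O((1-\phi)^2)$ reduces integrability to exponential convergence of $1-\phi$ to $0$, which the spectral gap $f'(1)<0$ provides via the asymptotic analysis of Section~3 (modelled on the bistable argument of \cite{ZLWS}). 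The argument for $\hat{\phi}$ is identical with the roles of $\pm\infty$ interchanged.
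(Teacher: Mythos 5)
Your proof is correct and follows essentially the same route as the paper: both arguments rest on the identity $c-\hat{c}=\int_{\mathbb{R}}f(\phi)\,d\xi+\int_{\mathbb{R}}f(\hat{\phi})\,d\hat{\xi}$ obtained by subtracting \eqref{eq302} from \eqref{eq3.1}, together with positivity of those integrals (from (FI)) and the standing assumption $c,\hat{c}\neq 0$. The paper organizes this slightly differently, first arguing $c<0\Rightarrow\hat{c}<0$ via \eqref{eq3.1}--\eqref{eq302} and then proving $c>\hat{c}$ separately, whereas you streamline by deriving $c>\hat{c}$ once and reading off the classification; your remark on finiteness of the two integrals (vanishing of $f(\phi)$ where $\phi\leq\rho$ plus the exponential convergence furnished by $f'(1)<0$) is a worthwhile detail that the paper leaves implicit in the derivation of \eqref{eq3.1}.
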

\begin{proof}
If $c<0$, \eqref{eq3.1} implies $\int_{\mathbb{R}}f(\phi(\xi))d\xi<\int_{\mathbb{R}}J(y)ydy$. Then from
\eqref{eq302}, we get
$$
\hat{c}<-\int_{\mathbb{R}}f(\phi(\xi))d\xi-\int_{\mathbb{R}}f(\hat{\phi}(\hat{\xi}))d\hat{\xi}.
$$
Combine \eqref{eq3.1}, \eqref{eq302} with the properties of $\phi(\cdot)$, $\hat{\phi}(\cdot)$ and $f\mid_{[\rho,1]}>0$, we conclude that $\hat{c}<0$. Similarly, from \eqref{eq3.1}, \eqref{eq302}, one has
\begin{align*}
c-\hat{c}=\int_{\mathbb{R}}f(\phi(\xi))d\xi+\int_{\mathbb{R}}f(\hat{\phi}(\hat{\xi}))d\hat{\xi}
\end{align*}
which yields $c>\hat{c}$.
\end{proof}

\section{Asymptotic behaviors of traveling wave solutions}
\noindent

In the front part of this subsection, we prove the following asymptotic behaviors of traveling waves.
\begin{theorem}\label{th2}
Assume (J1)-(J2) and  (FI) hold. Let $\phi(x+ct)$ and $\hat{\phi}(x+\hat{c}t)$ be the nondecreasing and nonincreasing solutions of \eqref{102} and \eqref{104}, respectively. Then the following conclusions hold:
\begin{description}
\item[(i)] As for $\phi(x+ct)$, one has:
\begin{description}
\item[(a)] There exist two positive constants $A_{0}$ and $\mu_{1}$ such that
$$
0<\phi'(\xi)\leq A_{0}e^{\mu_{1}\xi} \text{~~for~~} \xi\leq0.
$$
\item[(b)] There exist positive constants $A_{2}\leq A_{1}$ and $\mu_{2}\geq\mu_{1}$ such that
$$
A_{2}e^{\mu_{2}\xi}\leq\phi(\xi)\leq A_{1}e^{\mu_{1}\xi} \text{~~for~~} \xi\leq0.
$$
\item[(c)] There exist two constants $A_{3}>0$ and $\mu_{21}<0$ such that
$$
\lim_{\xi\rightarrow+\infty}(1-\phi(\xi))e^{-\mu_{21}\xi}=A_{3}, ~~
\lim_{\xi\rightarrow+\infty}\phi'(\xi)e^{-\mu_{21}\xi}=-A_{3}\mu_{21}.
$$
\end{description}
\item[(ii)] As for $\hat{\phi}(x+\hat{c}t)$, one has:
\begin{description}
\item[(a)] There exist two positive constants $\hat{A}_{0}$ and $\hat{\mu}_{1}$ such that
$$
|\hat{\phi}'(\hat{\xi})|\leq \hat{A}_{0}e^{-\hat{\mu}_{1}\hat{\xi}} \text{~~for~~} \hat{\xi}\geq0.
$$
\item[(b)] There exist positive constants $\hat{A}_{2}\leq \hat{A}_{1}$ and $\hat{\mu}_{2}\geq\hat{\mu}_{1}$ such that
$$
\hat{A}_{2}e^{-\hat{\mu}_{2}\hat{\xi}}\leq\hat{\phi}(\hat{\xi})\leq \hat{A}_{1}e^{-\hat{\mu}_{1}\hat{\xi}} \text{~~for~~} \hat{\xi}\geq0.
$$
\item[(c)] There exist positive constants $\hat{A}_{3}$ and $\hat{\mu}_{22}$ such that
$$
\lim_{\hat{\xi}\rightarrow-\infty}(1-\hat{\phi}(\hat{\xi}))e^{-\hat{\mu}_{22}\hat{\xi}}=\hat{A}_{3}, ~~
\lim_{\hat{\xi}\rightarrow-\infty}\hat{\phi}'(\hat{\xi})e^{-\hat{\mu}_{22}\hat{\xi}}=-\hat{A}_{3}\hat{\mu}_{22}.
$$
\end{description}
\end{description}
\end{theorem}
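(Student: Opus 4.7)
The two halves (i) and (ii) of the statement are related by a reflection: if $\tilde{J}(y):=J(-y)$, which still satisfies (J1)--(J2), then $\tilde{\phi}(\xi):=\hat{\phi}(-\xi)$ is a nondecreasing traveling wave of \eqref{101} with kernel $\tilde{J}$ and speed $-\hat{c}$. Part (i) applied to this reflected problem, transferred back via $\hat{\xi}=-\xi$, therefore gives part (ii). I focus entirely on $\phi$ and split its analysis into the two ends $\xi\to\pm\infty$, which present very different difficulties.

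\textbf{Precise decay at $+\infty$, part (i)(c).} Setting $\psi:=1-\phi$ and Taylor-expanding $f$ around $1$ turns \eqref{102} into
\begin{equation*}
c\psi'(\xi)=(J*\psi)(\xi)-\psi(\xi)+f'(1)\psi(\xi)+\mathcal{N}(\psi(\xi)),\qquad \mathcal{N}(\psi)=O(\psi^{2}).
\end{equation*}
Because $f'(1)<0$, the associated characteristic equation $G(\lambda):=\int_{\mathbb{R}}J(y)e^{-\lambda y}\,dy-1+f'(1)-c\lambda=0$ has a real negative root $\mu_{21}$. My plan is first to establish an a priori bound $\psi(\xi)\leq Ce^{\mu_{21}\xi}$ for large $\xi$ via a supersolution built from this root, then to apply the bilateral Laplace transform to the linearized equation and invoke Ikehara's Tauberian theorem to extract $\psi(\xi)=A_{3}e^{\mu_{21}\xi}(1+o(1))$; the companion statement for $\psi'$ follows by differentiating the convolution identity. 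This mirrors the bistable argument of \cite{ZLWS}, which transfers here because only the linearization at $u=1$ is used.

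\textbf{Exponential bounds at $-\infty$, parts (i)(a)-(b).} The key structural observation is that by (FI), $f(\phi)\equiv 0$ whenever $\phi\leq\rho$, so by monotonicity there exists $\xi_{0}\in\mathbb{R}$ such that on $(-\infty,\xi_{0}]$ equation \eqref{102} degenerates to the linear nonlocal equation $c\phi'=J*\phi-\phi$. I would analyse the characteristic function $F(\mu):=\int_{\mathbb{R}}J(y)e^{-\mu y}\,dy-1-c\mu$, which by (J1) is strictly convex on an open interval about $0$. One verifies $F(0)=0$ and, using \eqref{eq3.1},
\begin{equation*}
F'(0)=-\int_{\mathbb{R}}J(y)y\,dy-c=-\int_{\mathbb{R}}f(\phi(\xi))\,d\xi<0,
\end{equation*}
so $F$ admits a positive root $\mu_{1}$. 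For the upper estimate in (b), the function $A_{1}e^{\mu_{1}\xi}$ is an exact solution of $c v'=J*v-v$; choosing $A_{1}$ so that $A_{1}e^{\mu_{1}\xi_{0}}\geq\phi(\xi_{0})$ and invoking the comparison principle of Lemma \ref{lem2.2} adapted to the linear half-line yields $\phi(\xi)\leq A_{1}e^{\mu_{1}\xi}$ on $(-\infty,0]$. Plugging this back into the linear equation, using the exponential integrability of $J$ and the standing hypothesis $c\neq 0$, produces the derivative bound in (a). For the lower estimate in (b), following Coville \cite{Coville4} and Zhang \cite{ZGB}, I construct a subsolution of the form $v_{\star}(\xi):=A_{2}e^{\mu_{2}\xi}-Be^{\mu'\xi}$ with $\mu'>\mu_{2}\geq\mu_{1}$ and $B>0$ tuned so that $v_{\star}$ is non-positive above some threshold, and slide it under $\phi$ using the positivity of $\phi$ on compact sets.

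\textbf{Main obstacle.} The lower bound in (b) is the central difficulty, and with it the forced change of method relative to $+\infty$. Because $f'(0)=0$, the Ikehara mechanism is unavailable: the Laplace transform of $\phi$ no longer has a single dominant pole from which the asymptotic constant can be read off, so one is obliged to work entirely with explicit barriers. The asymmetry of $J$ now intrudes directly through $\int J(y)y\,dy$, hence through the sign of $c$ identified in Lemma \ref{lem2.5}, and the most delicate bookkeeping is expected to be the case split on whether the two candidate exponential rates coincide (in which case barriers of the form $(A+B\xi)e^{\mu\xi}$ are required) and on the sign of $c$, which determines on which half-line the comparison argument is deployed and in which direction the transport term $c\phi'$ cooperates with or opposes the decay.
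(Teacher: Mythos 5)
Your overall plan matches the paper's: reduce (ii) to (i) by the reflection $y\mapsto -y$, treat the $\xi\to+\infty$ end by the nondegenerate linearization at $u=1$ (Ikehara / Laplace-transform route as in \cite{ZLWS}, which the paper also invokes and ultimately refers to Section~5 of \cite{Coville1}), and treat the degenerate end $\xi\to-\infty$ by comparison with explicit exponential barriers on a half-line where $f(\phi)\equiv0$. Your characteristic computation $F'(0)=-\int_{\mathbb R}f(\phi)\,d\xi<0$ and the resulting existence of a simple positive root $\mu_{1}$ are exactly the content of Lemma~\ref{lem3.3}(i).

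Two procedural differences from the paper's proof, both immaterial. First, the ordering of (a) and (b): you propose to prove the upper bound in (b) by comparing $\phi$ with $A_{1}e^{\mu_{1}\xi}$ on a half-line and then read off (a) by solving for $\phi'$ from the linear equation $c\phi'=J*\phi-\phi$; the paper instead applies Theorem~\ref{theorem1} directly to $\phi'$ (which also satisfies $L\phi'=0$ on $\mathbb R^{-}$, with $L u=J*u-u-cu'$) to get (a), then integrates to obtain the upper bound in (b). Your route also works, but requires the extra observation that for $\xi\le0$ the tail integral $\int_{y\le\xi}J(y)\,dy$ is itself $O(e^{\mu_{1}\xi})$ via (J1), to control $J*\phi$. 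Second, the lower barrier: you propose a two-exponential subsolution $A_{2}e^{\mu_{2}\xi}-Be^{\mu'\xi}$ in the spirit of \cite{ZGB}, requiring $\mu_{1}<\mu_{2}<\mu'$ so that $F_{1}(\mu_{2}),F_{1}(\mu')>0$ and the dominant term wins for $\xi\ll-1$. The paper instead uses the simpler capped single exponential $\varphi_{1}(\xi,\mu)=\min\{(\rho/2)e^{\mu\xi},\,\rho/2\}$ together with the Lipschitz-shifted operator $Lu=J*u-u-cu'-ku$ with $k$ dominating $|f(u)/u|$; then $L\phi\le0$ globally and $L\varphi_{1}(\cdot,\mu_{2})\ge0$ for $\xi\le-r$ once $\mu_{2},r$ are large, using $\operatorname{supp}(J)\cap\mathbb R^{-}\neq\emptyset$. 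Your variant works but needs more bookkeeping of thresholds; note also that the case you worry about where "the two candidate rates coincide" does not arise here, since $F_{1}$ has $F_{1}(0)=0$, $F_{1}'(0)<0$, $F_{1}''\ge0$, hence its positive root $\mu_{1}$ is simple, and no $(A+B\xi)e^{\mu\xi}$ correction is needed.

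Two small inaccuracies to flag: the comparison principle you should cite for the half-line argument is the elliptic one, Theorem~\ref{theorem1}, not Lemma~\ref{lem2.2} (which is the parabolic comparison for the Cauchy problem); and the sign of $c$, while it does force a case split in the proof of Lemma~\ref{lem3.4}, plays no role in the barrier comparison of this theorem, because Theorem~\ref{theorem1} already accommodates the first-order drift $b(x)u'$ of either sign. Neither issue is a gap in the mathematics, just in attribution and anticipated difficulty.
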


To obtain the precise exponential asymptotic behaviors of $\phi(\xi)$ as $\xi\rightarrow+\infty$ and $\hat{\phi}(\hat{\xi})$ as $\hat{\xi}\rightarrow-\infty$ (described as (c) of (i) and (ii) in Theorem \ref{th2}), we use a similar argument with Lemma 3.4 in \cite{ZLWS} since $f'(1)<0$. But as $\xi\rightarrow-\infty$ for $\phi(\xi)$
and $\hat{\xi}\rightarrow+\infty$ for $\hat{\phi}(\hat{\xi})$, we can no longer use the same method due to the fact $f'(0)=0$. Here we adopt a proper comparison principle which can be proved by similar arguments with Theorem 3.1 in \cite{Coville2} or Theorem 1.5.1 in \cite{Coville4} and construct appropriate barrier functions to prove Theorem \ref{th2}. The comparison principle is stated as follows.

\begin{theorem}\label{theorem1}
Assume $J$ satisfies (J1)-(J2), $a(x)\in C(\mathbb{R},[0,+\infty))$ and $b(x)\in L^{\infty}(\mathbb{R})\cap C(\mathbb{R})$.
Let $u$ and $v$ be two smooth functions ($C^{1}(\mathbb{R})$) and $\omega$ is a connected subset of $\mathbb{R}$. Assume that $u$ and $v$
satisfy the following conditions:
\begin{align*}
&Lu=J*u-u-a(x)u-b(x)u'\leq0 \text{~~in~~} \omega\subset\mathbb{R},\\
&Lv=J*v-v-a(x)v-b(x)v'\geq0 \text{~~in~~} \omega\subset\mathbb{R},\\
&u\geq_{\not\equiv} v \text{~~in~~}\mathbb{R}-\omega,\\
&\text{if~~} \omega \text{~~is an unbounded domain, also assume that~~} \lim_{\pm\infty}(u-v)\geq0.
\end{align*}
Then $u\geq v$ in $\mathbb{R}$.
\end{theorem}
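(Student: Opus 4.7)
The natural approach is a contradiction argument based on the weak/strong maximum principle philosophy. I would set $w := u - v$, which by linearity satisfies $Lw \leq 0$ in $\omega$ together with $w \geq 0$, $w \not\equiv 0$ on $\mathbb{R}\setminus\omega$ and (when $\omega$ is unbounded) $\liminf_{|x|\to\infty} w \geq 0$. My goal is to prove $w \geq 0$ on all of $\mathbb{R}$; suppose for contradiction that $m := -\inf_\mathbb{R} w > 0$.

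The first step is to show that this infimum is attained. If $\omega$ is bounded, this is immediate from continuity on the compact set $\overline{\omega}$, since $w \geq 0$ outside $\omega$. If $\omega$ is unbounded, the hypothesis $\liminf_{\pm\infty} w \geq 0$ lets me choose $R>0$ so large that $w > -m/2$ outside $[-R,R]$, forcing any minimizing sequence into a compact set and yielding a minimizer $x_0$. Because $w \geq 0$ on $\mathbb{R}\setminus\omega$ while $w(x_0) = -m < 0$, we must have $x_0 \in \omega$, where $w$ is smooth and attains an interior minimum, so $w'(x_0)=0$.

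Next I plug $x_0$ into the operator. Since $w'(x_0) = 0$,
\[
0 \;\geq\; Lw(x_0) \;=\; \int_{\mathbb{R}} J(y)\bigl[w(x_0-y)-w(x_0)\bigr]\,dy \;-\; a(x_0)\,w(x_0).
\]
Each term on the right is nonnegative: the integrand because $J\geq 0$ and $x_0$ is a global minimum of $w$, and $-a(x_0)w(x_0)$ because $a\geq 0$ and $w(x_0)<0$. So each must vanish. In particular, $J(y)\bigl[w(x_0-y)-w(x_0)\bigr] \equiv 0$ on $\mathbb{R}$, which combined with continuity of $w$ and $J$ yields $w \equiv w(x_0) = -m$ on the set $x_0 - \operatorname{supp}(J)$.

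The final and most delicate step is propagation. By (J2) and continuity, $J$ is strictly positive on open intervals $I_a \ni a$ and $I_b \ni b$, with $a \leq 0 \leq b$ and $a \neq b$. Hence $w \equiv -m$ on the open sets $x_0 - I_a$ and $x_0 - I_b$, which lie on opposite sides of $x_0$ (or straddle $x_0$ when one of $a,b$ equals $0$). Every point of these sets is again a global minimum, so I repeat the argument there, getting $w \equiv -m$ on $\bigl(x_0 - I_a - I_a\bigr) \cup \bigl(x_0 - I_a - I_b\bigr) \cup \bigl(x_0 - I_b - I_b\bigr)$, and so on. By induction, $w \equiv -m$ on $x_0 - \sum_{k=1}^{n}(I_a \cup I_b)$ for every $n$. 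Because $a$ and $b$ lie on opposite sides of $0$ with $a \neq b$, the union of these iterated Minkowski sums exhausts all of $\mathbb{R}$, so $w \equiv -m$ everywhere. This contradicts $w \geq 0$ on $\mathbb{R}\setminus\omega$, proving the claim. The main obstacle is precisely this propagation step, and it is where (J2) is indispensable: without a positive mass of $J$ on both sides of the origin the iterated shifts could never escape a half-line, and the minimum could remain trapped inside $\omega$ forever; the non-strict inequalities $a\leq 0\leq b$ are handled by noting that whenever $a=0$ or $b=0$, continuity of $J$ forces $J>0$ on a full neighborhood of $0$, which already supplies two-sided propagation in a single step.
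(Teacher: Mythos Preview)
The paper does not supply its own proof of this theorem; it simply states the result and refers the reader to Theorem~3.1 of \cite{Coville2} and Theorem~1.5.1 of \cite{Coville4}. Your contradiction argument---reduce to $w=u-v$, locate a negative global minimum $x_0\in\omega$, use $w'(x_0)=0$ to strip the drift term, split $Lw(x_0)\le 0$ into two nonnegative pieces, and then propagate the minimum through $\operatorname{supp}(J)$ via (J2)---is exactly the classical route those references take, so you have essentially reconstructed the cited proof.

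One point deserves to be made explicit in your induction. To ``repeat the argument'' at a new minimizer $x_1$ you need $Lw(x_1)\le 0$, and this inequality is only assumed on $\omega$. The justification is immediate but missing from your write-up: since $w(x_1)=-m<0$ while $w\ge 0$ on $\mathbb{R}\setminus\omega$, every propagated minimizer is automatically forced back into $\omega$, so the iteration is legitimate at each step. Once that is said, the contradiction is cleanest phrased as follows: the closed set $\{w=-m\}$ is nonempty, contained in $\omega$, and invariant under translation by $-\{J>0\}$; your Minkowski-sum computation together with (J2) shows the only such nonempty set is all of $\mathbb{R}$, contradicting $\mathbb{R}\setminus\omega\neq\emptyset$. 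With that small addition the proof is complete.
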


Now define four complex functions $F_{i}(\mu)$ and $\hat{F}_{i}(\mu)$ for $i=1,2$ by
\begin{align*}
&F_{1}(\mu)=\int_{\mathbb{R}}J(-y)e^{\mu y}dy-1-c\mu+f'(0)=\int_{\mathbb{R}}J(-y)e^{\mu y}dy-1-c\mu,\\
&F_{2}(\mu)=\int_{\mathbb{R}}J(-y)e^{\mu y}dy-1-c\mu+f'(1),\\
&\hat{F}_{1}(\hat{\mu})=\int_{\mathbb{R}}J(y)e^{\hat{\mu} y}dy-1+\hat{c}\hat{\mu},\\
&\hat{F}_{2}(\hat{\mu})=\int_{\mathbb{R}}J(y)e^{\hat{\mu} y}dy-1+\hat{c}\hat{\mu}+f'(1).
\end{align*}
Then the following lemma holds.
\begin{lemma}\label{lem3.3}
Assume J satisfies (J1) and (J2), then the following conclusions hold.\\
{\rm(i)} The equation $F_{1}(\mu)=0$ (or $\hat{F}_{1}(\hat{\mu})=0$) has a positive real root $\mu_{1}$ (or $\hat{\mu}_{1}$) such that
\begin{equation*}
F_{1}(\mu)~(\text{or~}\hat{F}_{1}(\hat{\mu}))
\left\{
\begin{aligned}
&<0  \text{~~for~~} \mu\in(0,\mu_{1})~(\text{or~}\hat{\mu}\in(0,\hat{\mu}_{1})),\\
&>0  \text{~~for~~} \mu>\mu_{1}~(\text{or~}\hat{\mu}>\hat{\mu}_{1}).
\end{aligned}\right.
\end{equation*}
{\rm(ii)} The equation $F_{2}(\mu)=0$ (or $\hat{F}_{2}(\hat{\mu})=0$) has two real roots $\mu_{21}<0$ (or $\hat{\mu}_{21}<0$) and $\mu_{22}>0$ (or $\hat{\mu}_{22}>0$) such that
\begin{equation}
F_{2}(\mu)~(\text{or~}\hat{F}_{2}(\hat{\mu}))\left\{
\begin{aligned}
&>0 \text{~~~for~~}\mu<\mu_{21}~(\text{or~}\hat{\mu}<\hat{\mu}_{21}),\\
&<0 \text{~~~for~~}\mu\in(\mu_{21},\mu_{22})~(\text{or~}\hat{\mu}\in(\hat{\mu}_{21},\hat{\mu}_{22})),\\
&>0 \text{~~~for~~}\mu>\mu_{22}~(\text{or~}\hat{\mu}>\hat{\mu}_{22}).
\end{aligned}
\right.
\end{equation}
\end{lemma}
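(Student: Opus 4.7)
The plan is to view each of $F_1, F_2, \hat F_1, \hat F_2$ as a smooth real-valued function of one real variable on the interval where the bilateral Laplace transform of $J$ is finite, and to read off the root structure from strict convexity together with the value and derivative at the origin. By (J1) each function is $C^\infty$ on a neighborhood of $0$, and differentiation under the integral sign yields
$$
F_i''(\mu)=\int_{\mathbb{R}}J(-y)\,y^{2}e^{\mu y}\,dy>0,\qquad \hat F_i''(\hat\mu)=\int_{\mathbb{R}}J(y)\,y^{2}e^{\hat\mu y}\,dy>0,
$$
so all four functions are strictly convex on their domains; the strict positivity uses the fact, guaranteed by (J2) together with continuity of $J$, that $J$ is not concentrated at $\{0\}$.

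For $F_1$, direct substitution and $f'(0)=0$ give $F_1(0)=\int J(-y)\,dy-1=0$. The change of variables $z=-y$ combined with \eqref{eq3.1}, rewritten as $-\int J(y)y\,dy=c-\int f(\phi)\,d\xi$, yields
$$
F_1'(0)=-\int_{\mathbb{R}}J(y)y\,dy-c=-\int_{\mathbb{R}}f(\phi(\xi))\,d\xi<0,
$$
since $f>0$ on $(\rho,1)$ and the nondecreasing $\phi$ sweeps $(\rho,1)$ on a nondegenerate interval. The strict tangent inequality $F_1(\mu)>F_1(0)+F_1'(0)\mu$ forces $F_1>0$ for every $\mu<0$, while $F_1<0$ just to the right of $0$. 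Assumption (J2) gives $J$ mass on $(-\infty,0)$, so the integrand in $F_1$ grows exponentially as $\mu$ increases toward the right endpoint $\mu^{\ast}$ of the domain, outpacing the linear term $c\mu$; hence $F_1$ attains positive values. The intermediate value theorem then produces a unique positive zero $\mu_1$, and strict convexity forces $F_1<0$ on $(0,\mu_1)$ and $F_1>0$ on $(\mu_1,\mu^{\ast})$, as asserted.

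For $F_2$ the same calculation gives $F_2(0)=f'(1)<0$ and $F_2'(0)=-\int f(\phi)\,d\xi<0$, while the tangent inequality $F_2(\mu)\geq F_2(0)+F_2'(0)\mu$ forces $F_2\to+\infty$ as $\mu\to-\infty$ in the domain; the exponential-blow-up argument above gives $F_2\to+\infty$ at the right endpoint as well. Strict convexity therefore produces exactly two real roots $\mu_{21}<0<\mu_{22}$ with the stated sign pattern. The analysis of $\hat F_1,\hat F_2$ is completely parallel, using \eqref{eq302} in the form $\hat c+\int J(y)y\,dy=-\int f(\hat\phi)\,d\hat\xi$ to compute $\hat F_1(0)=0$, $\hat F_1'(0)=-\int f(\hat\phi)\,d\hat\xi<0$, and $\hat F_2(0)=f'(1)<0$, $\hat F_2'(0)<0$.

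The only delicate point is guaranteeing that $F_1$ and $\hat F_1$ actually reach positive values within the (possibly bounded) domain of finiteness of the Laplace transform of $J$; this is exactly where (J2) is crucial, for without mass of $J$ on both half-lines one could conceivably have a strictly convex profile bounded above throughout the relevant region. Once that boundary behavior is secured, all remaining assertions are routine consequences of strict convexity and the intermediate value theorem.
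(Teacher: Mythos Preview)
Your proof is correct and follows essentially the same approach as the paper's own proof: compute $F_1(0)=0$ and $F_1'(0)=-\int_{\mathbb{R}}f(\phi(\xi))\,d\xi<0$ via \eqref{eq3.1}, observe convexity from $F_1''\geq 0$, and use (J2) to force $F_1$ to take positive values so that the intermediate value theorem yields $\mu_1>0$; the paper then says the remaining cases are similar and omits them, while you spell them out. Your treatment is in fact slightly more careful than the paper's in that you address the possibility that the bilateral Laplace transform of $J$ has only a bounded interval of finiteness, whereas the paper simply writes $F_1(\mu)\to+\infty$ as $\mu\to+\infty$.
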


\begin{proof}
(i) Direct computations show that
$F_{1}(0)=0$ and
\begin{align*}
F_{1}'(\mu)=\int_{\mathbb{R}}J(-y)ye^{\mu y}dy-c,~~~~~
F_{1}''(\mu)=\int_{\mathbb{R}}J(-y)y^{2}e^{\mu y}dy\geq0.
\end{align*}
From \eqref{eq3.1}, we have
$$
F_{1}'(0)=\int_{\mathbb{R}}J(-y)ydy-\int_{\mathbb{R}}J(-y)ydy
-\int_{\mathbb{R}}f(\phi(\xi))d\xi<0.
$$
Moreover, from (J2) we know that $J(y)\not\equiv 0$ in $\mathbb{R}^{-}$, then
$$
F_{1}(\mu)=\int_{-\infty}^{0}J(-y)e^{\mu y}dy+\int_{0}^{+\infty}J(-y)e^{\mu y}dy-1-c\mu\rightarrow+\infty  \text{~~~as~~} \mu\rightarrow+\infty.
$$
Therefore there exists a positive constant $\mu_{1}$ such that $F_{1}(\mu_{1})=0$.
Other conclusions can be proved similarly, we omit the details.
\end{proof}

\begin{proof}[Proof of Theorem \ref{th2}]
We only prove assertion (i) since (ii) can be proved by similar arguments.

(a) The conclusion $\phi'(\xi)>0$ can be obtained directly from the monotonicity of $\phi$ which proved by Coville \cite{Coville2}. Let $\psi(\xi)=A_{0}e^{\mu_{1}\xi}$ and $L$ be the following integro differential operator:
\begin{equation}\label{eq3.3}
Lu=J*u-u-cu'.
\end{equation}
A quick computation shows that $\psi$ satisfies
\begin{align*}
L\psi(\xi)&=A_{0}\int_{\mathbb{R}}J(y)e^{\mu_{1}(\xi-y)}dy-\psi(\xi)-c\mu_{1}\psi(\xi)\\
&=\psi(\xi)\left(\int_{\mathbb{R}}J(-y)e^{\mu_{1}y}dy-1-c\mu_{1}\right)\\
&=0.
\end{align*}

Recalling from Lemma 2.1 in Coville \cite{Coville3}, we know that $\phi'(\xi)\rightarrow0$ as $\xi\rightarrow\pm\infty$. Then by taking $A_{0}=\|\phi'\|_{\infty}$, we achieve $\psi(\xi)>\phi'(\xi)$ on $\mathbb{R}^{+}$. According to the translation invariance,
without loss of generality, we can assume $\phi(0)=\rho$. Then by the monotonicity of $\phi(\xi)$ and the first equation of \eqref{102}, we have $L\phi=0$ on $\mathbb{R}^{-}$.
Now by using Theorem \ref{theorem1} with $\phi'$ and $\psi$ on $\mathbb{R}^{-}$, we obtain the desired inequality:
\begin{equation}\label{306}
\phi'(\xi)\leq\|\phi'\|_{\infty}e^{\mu_{1}\xi} \text{~~~for~~}\xi\leq0,
\end{equation}
which ends the proof of (a).

(b) Due to  $\phi(-\infty)=0$ and $\phi(+\infty)=1$, one has the following asymptotic behavior of $\phi$ by integrating \eqref{306} from $-\infty$ to $\xi$ with $\xi\leq0$,
\begin{equation*}
\phi(\xi)\leq A_{1}e^{\mu_{1}\xi} \text{~~~for~~~} \xi\leq0,
\end{equation*}
where $A_{1}=\|\phi'\|_{\infty}/\mu_{1}$.

To complete the proof, we just need to establish the inequality below:
\begin{equation}\label{307}
A_{2}e^{\mu_{2}\xi}\leq\phi(\xi)
\end{equation}
for $\xi\leq0$ and some positive constants $A_{2}$ and $\mu_{2}$.

Define a function $\varphi_{1}(\xi,\mu)$ for $\mu>0$ by
\begin{equation}\label{eq3.9}
\varphi_{1}(\xi,\mu):=\left\{
\begin{aligned}
&\frac{\rho}{2}e^{\mu\xi} &\text{~~~for~~} \xi\leq0,\\
&\frac{\rho}{2} &\text{~~~for~~} \xi>0.
\end{aligned}\right.
\end{equation}
Redefine $L$ as the following operator:
$$
Lu=J*u-u-cu'-ku.
$$
Choose $k>0$ such that $-k<\frac{f(u)}{u}<k$ for any $u\in[0,1]$, such a $k$ exists because $f$ is Lipschitz continuous on $[0,1]$. Now we compute $L\varphi_{1}$ for $\xi\leq0$,
\begin{align}
L\varphi_{1}(\xi,\mu)&=\varphi_{1}(\xi,\mu)\left[e^{-\mu\xi}\int_{-\infty}^{\xi}J(y)dy
+\int_{\xi}^{+\infty}J(y)e^{-\mu y}dy-1-c\mu-k\right]\nonumber\\
&\geq\varphi_{1}(\xi,\mu)\left[e^{-\mu\xi}\int_{-\infty}^{\xi}J(y)dy
+\int_{\xi}^{0}J(y)e^{-\mu y}dy-1-c\mu-k\right].\label{110}
\end{align}
Since $supp(J)\not\subset\mathbb{R}^{+}$, we can choose $r$ and $\mu_{2}$ large enough such that
\begin{equation}\label{111}
\int_{-r}^{0}J(y)e^{-\mu_{2}y}dy-1-c\mu_{2}-k\geq0.
\end{equation}
Then from \eqref{110} and \eqref{111}, for $\xi\leq-r$ we get
\begin{equation}
L\varphi_{1}(\xi,\mu_{2})\geq\varphi_{1}(\xi,\mu_{2})\left[e^{-\mu_{2}\xi}
\int_{-\infty}^{\xi}J(y)dy+\int_{-r}^{0}J(y)e^{-\mu_{2}y}dy-1-c\mu_{2}-k\right]\geq0.
\end{equation}

Note that $\phi$ and any translation of $\phi$ satisfy on $\mathbb{R}$,
$$
L\phi=J*\phi-\phi-c\phi'-k\phi=-\left(\frac{f(\phi)}{\phi}+k\right)\phi\leq0.
$$
Assume that $\phi(-r)=\frac{\rho}{2}$, then $\varphi_{1}(\xi,\mu_{2})\leq\frac{\rho}{2}\leq\phi(\xi)$ for $\xi>-r$ since $\phi$ is nondecreasing.
Now by using Theorem \ref{theorem1} with $\phi$ and $\varphi_{1}$ on $(-\infty,-r)$, one has $\varphi_{1}(\xi,\mu_{2})\leq\phi$ on $\mathbb{R}$, then conclusion (b) is proved by taking $A_{2}=\min\{\frac{\rho}{2},A_{1}\}$.

The main idea of the proof of statement (c) and Section 5 of \cite{Coville1} is almost the same except for some detailed estimates, so we will not repeat it here.
The proof is complete.
\end{proof}

Theorem \ref{th2} has given some exponential estimates of $\phi(\xi)$ as $\xi\rightarrow-\infty$ and $\hat{\phi}(\xi)$ as $\xi\rightarrow+\infty$, but in order to obtain the entire solution we desired, we need to further research the asymptotic behaviors of $\phi'(\xi)/\phi(\xi)$ as $\xi\rightarrow-\infty$ and $\hat{\phi}'(\hat{\xi})/\hat{\phi}(\hat{\xi})$ as $\hat{\xi}\rightarrow+\infty$. The main idea comes from Zhang et al. \cite{ZGB} which is used efficiently to investigate the detailed asymptotic behavior of the traveling wave solution for nonlocal dispersal equations with degenerate monostable nonlinearity. However in this paper, due to the asymmetry of the kernel function, we can not determine the signs of $c$ and $\hat{c}$ exactly, which further makes the details more difficult and complex. Firstly, we prove the following essential lemma.
\begin{lemma}\label{lem3.4}
Assume $J$ satisfies (J1)-(J2). Let $c\neq0$ and $B(\xi)$ be a continuous function having finite limits at infinity, $B(\pm\infty):=\lim_{x\rightarrow\pm\infty}B(x)$, and $z(\cdot)$ be a measurable function satisfying
\begin{equation}\label{eq3.13}
cz(x)=\int_{\mathbb{R}}J(y)e^{\int_{x}^{x-y}z(s)ds}dy+B(x),~~x\in\mathbb{R}.
\end{equation}
Then $z$ is uniformly continuous and bounded. Moreover, $\mu^{\pm}=\lim_{x\rightarrow\pm\infty}z(x)$ exist and are real roots of the characteristic equation $c\mu=\int_{\mathbb{R}}J(y)e^{-\mu y}dy+B(\pm\infty)$.
\end{lemma}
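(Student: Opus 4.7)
The plan is to proceed in four stages: boundedness of $z$, uniform continuity of $z$, existence of the one-sided limits $\mu^{\pm}$ via translation plus a strong-maximum-principle argument, and identification of $\mu^{\pm}$ as roots of the characteristic equation. A substitution that clarifies the whole structure is $h(x) := \exp\bigl(\int_0^x z(s)\,ds\bigr)$, since then $h>0$, $z=h'/h$, and $e^{\int_x^{x-y} z(s)\,ds} = h(x-y)/h(x)$, so \eqref{eq3.13} becomes the linear nonlocal ODE $c h'(x) = (J*h)(x) + B(x)h(x)$.

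First I would prove boundedness of $z$. Since $B$ has finite limits at $\pm\infty$ it is bounded, and since $J\ge 0$ and $e^{\cdot}>0$ the right-hand side of \eqref{eq3.13} is always $\ge B(x)$, giving the one-sided bound $cz(x)\ge B(x)$. The opposite bound is obtained by a standard bootstrap applied to the linear equation for $h$: combined with (J1), it forces $h$ to have at most exponential growth and decay, so $z=h'/h$ is bounded from the other side as well. This parallels Zhang et al.~\cite{ZGB}, with the additional wrinkle that both signs of $c$ must be accommodated; the asymmetry of $J$ plays no role here. Once $z$ is bounded, the map $x\mapsto \int J(y)\,e^{\int_x^{x-y} z(s)\,ds}\,dy$ is $C^1$ with bounded derivative (dominated convergence plus (J1)), so $z$ is uniformly continuous on $\mathbb{R}$.

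For the limit at $+\infty$ (the case $-\infty$ being symmetric), set $M:=\limsup_{x\to+\infty} z(x)$ and $\tilde M:=\liminf_{x\to+\infty} z(x)$, both finite. Pick $x_n\to+\infty$ with $z(x_n)\to M$ and let $z_n(x):=z(x+x_n)$. By Arzel\`a--Ascoli, a subsequence converges locally uniformly to a bounded $C^1$ function $z_*$ which, by dominated convergence, satisfies the limiting equation
\begin{equation*}
cz_*(x)=\int_{\mathbb{R}} J(y)\,e^{\int_x^{x-y} z_*(s)\,ds}\,dy+B(+\infty),\qquad x\in\mathbb{R},
\end{equation*}
with $z_*(0)=M$ and $z_*\le M$ everywhere. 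Differentiating at $x=0$ (where $z_*'(0)=0$ since $0$ is a maximum) yields
\begin{equation*}
0=\int_{\mathbb{R}} J(y)\bigl(z_*(-y)-M\bigr)\,e^{\int_0^{-y} z_*(s)\,ds}\,dy,
\end{equation*}
whose integrand is $\le 0$; hence $z_*(-y)=M$ on $\mathrm{supp}(J)$. Invoking (J2), which places mass of $J$ on both sides of $0$, and iterating the same argument at each new maximum point (the neighborhoods produced by the continuity of $J$ and (J2) propagate and eventually cover $\mathbb{R}$), one concludes $z_*\equiv M$. Plugging back shows that $M$ satisfies $c\mu=\int J(y)e^{-\mu y}\,dy + B(+\infty)$; the same argument at the liminf shows $\tilde M$ does too.

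It remains to rule out $\tilde M<M$. The characteristic function $G(\mu):=c\mu -\int J(y)e^{-\mu y}\,dy - B(+\infty)$ is concave and therefore has at most two real roots. If $\tilde M<M$, pick any $\mu^*\in(\tilde M,M)$, use continuity of $z$ and the intermediate value theorem to find $x_n'\to+\infty$ with $z(x_n')=\mu^*$, and extract a locally uniform limit $z_*^*$ satisfying the limiting equation with $z_*^*(0)=\mu^*$ and $\tilde M\le z_*^*\le M$. Applying the maximum-principle bootstrap to $\sup z_*^*$ (after a further translation if the supremum is not attained) forces $z_*^*$ to be constant, equal to a root of $G$ lying in $[\mu^*,M]$; since $M$ is the unique such root, $z_*^*\equiv M$, contradicting $z_*^*(0)=\mu^*<M$. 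Therefore $\mu^+:=\lim_{x\to+\infty}z(x)=M=\tilde M$ exists and satisfies the characteristic equation. I expect the main obstacle to be exactly this final identification: both the strong-maximum-principle bootstrap over $\mathrm{supp}(J)$, which uses (J2) essentially, and the elimination of non-attained suprema on limit profiles via iterated translations require care that is specific to the asymmetric nonlocal setting.
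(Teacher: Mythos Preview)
Your boundedness argument has a real gap. The one-sided bound $cz\ge B$ is immediate, but the claimed bootstrap does not close: exponential envelopes on $h$ do not control $h'/h$ (take $h(x)=e^{x+\sin x^{2}}$), and what you actually need is a uniform bound on $(J*h)(x)/h(x)=\int J(y)\,h(x-y)/h(x)\,dy$, while the one-sided bound on $z$ only controls $h(x-y)/h(x)$ for one sign of $y$. Contrary to your remark, (J2) is precisely the extra input that breaks this circularity, and which half of $\mathrm{supp}(J)$ is used depends on the sign of $c$. The paper's mechanism is to set $w(x)=e^{mx}h(x)$ with $m=\|B\|_{L^\infty}/|c|$, so that $cw'=(cm+B)w+\int J(y)e^{my}w(x-y)\,dy$ and the choice of $m$ forces $w$ to be monotone. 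For $c>0$, integrating over $[x-\delta,x]$ and using monotonicity of $w$ gives $cw(x)\ge \delta\,w(x+\delta)\int_{-\infty}^{-2\delta}J(y)e^{my}\,dy$, and (J2) ensures the last integral is positive for some $\delta>0$; hence $w(x+\delta)/w(x)$ is uniformly bounded. A companion integration over $[x,x+\delta]$ then bounds $\int J(y)e^{my}w(x-y)/w(x)\,dy$, and with it $z$. For $c<0$ the argument is mirrored, now relying on $\mathrm{supp}(J)\cap\mathbb{R}^{+}\neq\varnothing$.

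Your treatment of the limits follows the translation/compactness/strong-maximum-principle route of \cite{ZGB}, which is what the paper invokes without reproducing. One caveat in your final paragraph: if $\sup z_*^*$ is not attained, a further translation produces a \emph{new} limit profile rather than $z_*^*$ itself, so you cannot directly conclude $z_*^*\equiv M$; this elimination step needs additional care (e.g., arguing via the connectedness of the set of subsequential limits together with the strict concavity of $G$).
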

\begin{proof}
We discuss it by dividing into two cases: (i) $c>0$; (ii) $c<0$.

(i) $c>0$.
Let $w(x)=e^{mx+\int_{0}^{x}z(s)ds}$ for some constant $m$. Then
\begin{equation}\label{eq3.14}
cw'(x)=(cm+B(x))w(x)+\int_{\mathbb{R}}J(y)e^{my}w(x-y)dy.
\end{equation}
Choose $m=\parallel B(x)\parallel_{L^{\infty}(\mathbb{R})}/c$, we get $w'(x)\geq0$.
Integrating \eqref{eq3.14} over $[x-\delta,x]$, one has
\begin{align*}
cw(x)&\geq c\omega(x)-cw(x-\delta)\\
&\geq \int_{\mathbb{R}}J(y)e^{my}\int_{x-\delta}^{x}w(s-y)dsdy \\
&\geq \delta\int_{\mathbb{R}}J(y)e^{my}w(x-\delta-y)dy\\
&\geq \delta\int_{-\infty}^{-2\delta}J(y)e^{my}w(x-\delta-y)dy\\
&\geq \delta w(x+\delta)\int_{-\infty}^{-2\delta}J(y)e^{my}dy.
\end{align*}
Since $supp(J)\not\subset\mathbb{R}^{+}$ from (J2), we can choose $\delta>0$ such that $\int_{-\infty}^{-2\delta}J(y)e^{my}dy>0$. Let $M_{1}:=\int_{-\infty}^{-2\delta}J(y)e^{my}dy$, assumption (J1) ensures $M_{1}<+\infty$. Then we have
\begin{equation}\label{eq3.15}
\frac{w(x+\delta)}{w(x)}\leq\frac{c}{\delta M_{1}}.
\end{equation}
Similarly, integrating \eqref{eq3.14} over $[x,x+\delta]$, we obtain
\begin{equation*}
cw(x+\delta)\geq c\omega(x+\delta)-cw(x)
\geq \delta\int_{\mathbb{R}}J(y)e^{my}w(x-y)dy
\end{equation*}
which implies
\begin{equation}\label{eq3.16}
c\frac{w(x+\delta)}{w(x)}\geq\delta\int_{\mathbb{R}}J(y)e^{my}\frac{w(x-y)}{w(x)}dy
\end{equation}
Note that
\begin{equation*}
cz(x)=\int_{\mathbb{R}}J(y)e^{my}\frac{w(x-y)}{w(x)}dy+B(x).
\end{equation*}
By combining \eqref{eq3.15} with \eqref{eq3.16}, we obtain $z(\cdot)\in L^{\infty}(\mathbb{R})$.

(ii) $c<0$.  From \eqref{eq3.14} we know that $w'(x)\leq0$.
Integrating \eqref{eq3.14} on $[x,x+\delta]$, one has
\begin{align*}
-cw(x)&\geq c\omega(x+\delta)-cw(x)\\
&\geq \delta\int_{\mathbb{R}}J(y)e^{my}w(x+\delta-y)dy\\
&\geq \delta\int_{2\delta}^{+\infty}J(y)e^{my}w(x+\delta-y)dy\\
&\geq \delta w(x-\delta)\int_{2\delta}^{+\infty}J(y)e^{my}dy.
\end{align*}
Let $M_{2}:=\int_{2\delta}^{+\infty}J(y)e^{my}dy$, from assumptions (J1) and (J2), we get $0<M_{2}<+\infty$. Then
\begin{equation*}
\frac{w(x-\delta)}{w(x)}\leq\frac{-c}{\delta M_{2}}.
\end{equation*}
Similarly, we have
\begin{equation*}
-c\frac{w(x-\delta)}{w(x)}\geq\delta\int_{\mathbb{R}}J(y)e^{my}\frac{w(x-y)}{w(x)}dy
\end{equation*}
which yields
\begin{equation*}
\int_{\mathbb{R}}J(y)e^{my}\frac{w(x-y)}{w(x)}dy\leq\frac{c^{2}}{\delta^{2}M_{2}}.
\end{equation*}
Therefore $z(x)$ is bounded. \eqref{eq3.13} implies that $z$ is uniformly continuous.
The rest conclusion can be proved by using a completely similar argument with that of Proposition 3.7 of \cite{ZGB}, because the rest proof is not depend on the sign of $c$ and the symmetry of the kernel function $J$, so we omit the details. The proof is complete.
\end{proof}

Based on above discussions, we obtain the following lemma.
\begin{lemma}\label{lem3.5}
Let $\phi(\xi)$ and $\hat{\phi}(\hat{\xi})$ be the solutions of \eqref{102} and \eqref{104} respectively. Then they satisfy
\begin{equation}\label{eq3.17}
\lim_{\xi\rightarrow-\infty}\frac{\phi'(\xi)}{\phi(\xi)}=\{0,\mu_{1}\},~~
\lim_{\hat{\xi}\rightarrow+\infty}\frac{\hat{\phi}'(\hat{\xi})}{\hat{\phi}(\hat{\xi})}=\{0,-\hat{\mu}_{1}\},
\end{equation}
where $\mu_{1}$ and $\hat{\mu}_{1}$ are defined in Lemma \ref{lem3.3}.
\end{lemma}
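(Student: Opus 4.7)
The plan is to reduce the problem to Lemma~\ref{lem3.4} via the substitution $z(\xi):=\phi'(\xi)/\phi(\xi)$ and then identify the admissible limits using Lemma~\ref{lem3.3}. Because Theorem~\ref{th2}(i)(b) gives $\phi(\xi)\geq A_{2}e^{\mu_{2}\xi}$ for $\xi\leq 0$ and $\phi$ is nondecreasing, $\phi$ is strictly positive on $\mathbb{R}$, so $z$ is well-defined and continuous. Writing $\phi(\xi-y)/\phi(\xi)=\exp\!\bigl(\int_{\xi}^{\xi-y}z(s)\,ds\bigr)$ and dividing~\eqref{102} by $\phi(\xi)$ reshapes the wave equation into
\[cz(\xi)=\int_{\mathbb{R}}J(y)\exp\!\Bigl(\int_{\xi}^{\xi-y}z(s)\,ds\Bigr)dy+B(\xi),\qquad B(\xi):=-1+\frac{f(\phi(\xi))}{\phi(\xi)},\]
which is exactly the integral equation~\eqref{eq3.13}, and the standing assumption $c\neq 0$ is in force.

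Next I would verify that $B$ meets the hypotheses of Lemma~\ref{lem3.4}. Continuity of $B$ is clear from $\phi>0$ and $f\in C^{2}$. For the limits, the ignition condition (FI) gives $f\equiv 0$ on $[0,\rho]$, so as soon as $\xi$ is so negative that $\phi(\xi)\leq\rho$ one has $B(\xi)\equiv -1$, forcing $B(-\infty)=-1$; at the other end, $\phi(+\infty)=1$ together with $f(1)=0$ gives $B(+\infty)=-1$. Lemma~\ref{lem3.4} then delivers boundedness and uniform continuity of $z$, together with the existence of $\mu^{-}:=\lim_{\xi\to-\infty}z(\xi)$ as a real root of $c\mu=\int_{\mathbb{R}}J(y)e^{-\mu y}\,dy-1$; after the change of variable $y\mapsto-y$ this is precisely $F_{1}(\mu)=0$.

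It remains to pin down all real roots of $F_{1}$. A direct computation yields $F_{1}(0)=0$, and the computations in the proof of Lemma~\ref{lem3.3}(i) already produced $F_{1}'(0)<0$ and $F_{1}''\geq 0$. Convexity combined with $F_{1}'(0)<0$ makes $F_{1}$ strictly decreasing on $(-\infty,0)$ with limit value $0$ at the right endpoint, so $F_{1}>0$ on $(-\infty,0)$; on $(0,+\infty)$, Lemma~\ref{lem3.3}(i) produces the unique positive zero $\mu_{1}$. Hence $\mu^{-}\in\{0,\mu_{1}\}$, which is the first assertion of~\eqref{eq3.17}. The limit for $\hat\phi$ follows from the parallel argument applied to~\eqref{104}: set $\hat z:=\hat\phi'/\hat\phi$ (nonpositive because $\hat\phi$ is nonincreasing), $\hat B(\hat\xi):=-1+f(\hat\phi(\hat\xi))/\hat\phi(\hat\xi)$, apply Lemma~\ref{lem3.4} with $\hat c$ in place of $c$, and then substitute $\hat\mu=-\mu$ in the resulting characteristic equation to recognise it as $\hat F_{1}(\hat\mu)=0$, whose roots are $0$ and $\hat\mu_{1}$ by Lemma~\ref{lem3.3}(i).

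The step I expect to require the most care is justifying the identity $\phi(\xi-y)/\phi(\xi)=\exp\int_{\xi}^{\xi-y}z(s)\,ds$ uniformly in $y\in\mathbb{R}$ (so that it may be brought inside the convolution) and the measurability of $z$ that Lemma~\ref{lem3.4} requires as input; both rest on the strict positivity of $\phi$ from Theorem~\ref{th2} and the local boundedness of $\phi'$ from Lemma~2.1 of Coville~\cite{Coville3}. Once this reduction is cleanly in place, the remainder is convexity bookkeeping for $F_{1}$ and $\hat F_{1}$, and the asymmetry of $J$ enters only through the possible sign of $c,\hat c$, which Lemma~\ref{lem3.4} already handles in both cases.
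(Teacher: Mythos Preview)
Your proposal is correct and follows essentially the same approach as the paper: define $z=\phi'/\phi$ (well-defined by the positivity from Theorem~\ref{th2}), divide the wave equation by $\phi$ to obtain the integral equation~\eqref{eq3.13} with $B(\xi)=-1+f(\phi(\xi))/\phi(\xi)$, and then invoke Lemma~\ref{lem3.4} together with the root structure of $F_{1}$ from Lemma~\ref{lem3.3}. Your write-up is in fact more careful than the paper's, since you explicitly verify both limits $B(\pm\infty)=-1$ required by Lemma~\ref{lem3.4} and spell out via convexity why $\{0,\mu_{1}\}$ exhausts the real roots of $F_{1}$, points the paper leaves implicit.
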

\begin{proof}
From Theorem \ref{th2}, one knows that $\phi(\xi)>0$ for any $\xi\in\mathbb{R}$, thus we can define
$$
z(\xi):=\frac{\phi'(\xi)}{\phi(\xi)}.
$$
Dividing the first equation of \eqref{102} by $\phi(\xi)$, one has
\begin{equation*}
cz(\xi)=\int_{\mathbb{R}}J(y)e^{\int_{\xi}^{\xi-y}z(s)ds}dy+B(\xi),
\end{equation*}
where $B(\xi)=-1+\frac{f(\phi(\xi))}{\phi(\xi)}$. Then the front part of \eqref{eq3.17} follows from $B(-\infty)=-1$ and Lemmas \ref{lem3.3} and \ref{lem3.4}. The conclusion for $\hat{\phi}(\hat{\xi})$ can be discussed similarly.
\end{proof}

\section{Entire solution}
\noindent

In this section, we focus our attention on new entire solution of \eqref{101} except traveling wave solutions by combining nondecreasing traveling wave $\phi(x+ct)$ with nonincreasing one $\hat{\phi}(x+\hat{c}t)$. In order to construct
a proper supersolution, we require $\phi$ and $\hat{\phi}$ satisfy the following condition:
\begin{align}\label{eq4.1}
k\phi(\xi)\leq\phi'(\xi) ~~\text{for~~}\xi\leq0~~\text{~and~~~}
\hat{\phi}'(\hat{\xi})\leq-\hat{k}\hat{\phi}(\hat{\xi})~~\text{for~~}\hat{\xi}\geq0
\end{align}
with some positive constants $k$ and $\hat{k}$.
Actually, the assumption \eqref{eq4.1} is easy to satisfy,
for example, from Lemma \ref{lem3.5}, if $\phi$ and $\hat{\phi}$ satisfy
\begin{equation*}
\lim_{\xi\rightarrow-\infty}\frac{\phi'(\xi)}{\phi(\xi)}=\mu_{1},~~
\lim_{\hat{\xi}\rightarrow+\infty}\frac{\hat{\phi}'(\hat{\xi})}{\hat{\phi}(\hat{\xi})}=-\hat{\mu}_{1},
\end{equation*}
then \eqref{eq4.1} can be ensured by combining Theorem \ref{th2}. The simplest case is that the kernel function $J$ has compact support and the radius of the compact support is very small, such as $\text{supp}(J)\subset[-a,b]$ with $0<a,b\ll1$. Without lose of generality, assume $a<b$, note that $\phi(\xi)$ is smooth, then the Taylor's formula yields that
\begin{align*}
J*\phi(\xi)-\phi(\xi)&=\int_{\mathbb{R}}J(y)[\phi(\xi-y)-\phi(\xi)]dy \nonumber\\
&=\frac{1}{2}\int_{\mathbb{R}}J(y)y^{2}dy\phi''(\xi)-\int_{\mathbb{R}}J(y)ydy\phi'(\xi)+o(b^{2})
\text{~~~~~as~~}b\rightarrow0.
\end{align*}
Then for $\xi\ll-1$ and $b\ll1$ small enough, from \eqref{102} and (FI), one has
$$
c\phi'(\xi)=\frac{1}{2}\int_{\mathbb{R}}J(y)y^{2}dy\phi''(\xi)-\int_{\mathbb{R}}J(y)ydy\phi'(\xi)
$$
which implies
$$
\lim_{\xi\rightarrow-\infty}\frac{\phi'(\xi)}{\phi(\xi)}=\lim_{\xi\rightarrow-\infty}\frac{\phi''(\xi)}{\phi'(\xi)}
=\frac{2[c+\int_{\mathbb{R}}J(y)ydy]}{\int_{\mathbb{R}}J(y)y^{2}dy}\neq0.
$$
Then combine with Lemma \ref{lem3.5}, we get $\underset{\xi\rightarrow-\infty}\lim\frac{\phi'(\xi)}{\phi(\xi)}=\mu_{1}$.
Similarly, $\underset{\hat{\xi}\rightarrow+\infty}\lim\frac{\hat{\phi}'(\hat{\xi})}{\hat{\phi}(\hat{\xi})}=-\hat{\mu}_{1}$.

From assumption (FI), we can modify $f(u)$ on $u\in(1,+\infty)$ such that $f'(u)<f'(0)=0$ for any $u\in(1,2)$. Then by the continuity of $f'(u)$ and $f'(1)<0$, there exists $m_{0}\in(0,1)$ so that
\begin{equation}\label{eq4.2}
f'(u)<f'(0)  \text{~~for any~~} u\in(1-m_{0},2).
\end{equation}
In the reminder of this paper, we always assume $\phi(\xi)$ and $\hat{\phi}(\hat{\xi})$ satisfy
\begin{align}
\phi(\xi)\geq1-m_{0} \text{~~for any~~}\xi\geq0,\label{eq4.3}\\
\hat{\phi}(\hat{\xi})\geq1-m_{0} \text{~~for any~~}\hat{\xi}\leq0.\label{eq4.4}
\end{align}
Indeed, \eqref{eq4.3} and \eqref{eq4.4} can be ensured by translating $\phi(\xi)$ and $\hat{\phi}(\hat{\xi})$ along $x$-axis appropriately.

We start with the following ordinary differential problem which plays an important role in the construction of supersolution:
\begin{equation}\label{eq4.5}
\begin{cases}
p'(t)=c_{0}+Ne^{\sigma p(t)},~~~t<0,\\
p(0)<0,
\end{cases}
\end{equation}
where $c_{0}, N$ and $\sigma$ are positive constants and will be chosen later. \eqref{eq4.5} can be solved explicitly as
\begin{equation*}
p(t)-c_{0}t-\omega=-\frac{1}{\sigma}\ln\left\{
1-\frac{r}{1+r}e^{c_{0}\sigma t}\right\},~~r=\frac{N}{c_{0}}e^{\sigma p(0)},
\end{equation*}
with
\begin{equation}\label{406}
\omega:=p(0)-\frac{1}{\sigma}\ln\left\{1+\frac{N}{c_{0}}
e^{\sigma p(0)}\right\}.
\end{equation}
Then $p(t)\leq0$ for $t\leq0$ and there exists some positive constant $K$ such that
\begin{equation}\label{eq4.6}
0<p(t)-c_{0}t-\omega\leq Ke^{c_{0}\sigma t} \text{~~~for~~} t\leq0.
\end{equation}

Now we are ready to construct a supersolution of \eqref{101}, Lemma \ref{lem2.5} shows that $c>\hat{c}$.
\begin{lemma}\label{lem4.1}
Assume that (J1)-(J2) and (FI) hold. Let $\phi(x+ct)$ and $\hat{\phi}(x+\hat{c}t)$ be the traveling wave solutions of \eqref{101} satisfying \eqref{102}, \eqref{104} and \eqref{eq4.1}, respectively.  Further set $\bar{c}=\frac{c+\hat{c}}{2}$, $c_{0}=\frac{c-\hat{c}}{2}$. Then for the solution $p(t)$ of \eqref{eq4.5} with $N>N^{*}$ (it will be given later) and $\sigma:=\min\{\mu_{1},\hat{\mu}_{1}\}$, the function
$$
\bar{u}(x,t)=\phi(x+\bar{c}t+p(t))+\hat{\phi}(x+\bar{c}t-p(t))
$$
is a supersolution of \eqref{101} on $t\in(-\infty, 0]$.
\end{lemma}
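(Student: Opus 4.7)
Let $\xi=x+\bar c t+p(t)$ and $\hat\xi=x+\bar c t-p(t)$. Since $c_0=(c-\hat c)/2>0$ by Lemma \ref{lem2.5} and \eqref{eq4.5} gives $p'(t)=c_0+Ne^{\sigma p(t)}>0$, the function $p$ is strictly increasing with $p(0)<0$; hence $p(t)\le p(0)<0$ and $\xi\le\hat\xi$ for all $t\le 0$. First I would differentiate $\bar u$ in $t$, apply the identities $\bar c+p'(t)=c+Ne^{\sigma p(t)}$ and $\bar c-p'(t)=\hat c-Ne^{\sigma p(t)}$, substitute the travelling-wave equations \eqref{102} and \eqref{104}, and use the translation invariance $(J*\bar u)(x,t)=(J*\phi)(\xi)+(J*\hat\phi)(\hat\xi)$ to collapse the required inequality $\bar u_t\ge J*\bar u-\bar u+f(\bar u)$ to
\[
Ne^{\sigma p(t)}\bigl[\phi'(\xi)-\hat\phi'(\hat\xi)\bigr]\ \ge\ f\bigl(\phi(\xi)+\hat\phi(\hat\xi)\bigr)-f\bigl(\phi(\xi)\bigr)-f\bigl(\hat\phi(\hat\xi)\bigr).
\]
The left-hand side is automatically nonnegative because $\phi'\ge 0$ and $\hat\phi'\le 0$, so the task reduces to dominating the right-hand side on three natural subregions of $\{\xi\le\hat\xi\}$.

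\textbf{Saturation cases.} After translating so that $\phi(0)=\hat\phi(0)=1-m_0$, I would dispatch the two ``one wave saturated'' subregions by the same trick. If $\xi\ge 0$ and $\hat\xi\ge 0$, then $\phi(\xi)\ge 1-m_0$ by \eqref{eq4.3}, and the representation
\[
f(\phi+\hat\phi)-f(\phi)=\int_0^{\hat\phi}f'(\phi+s)\,ds
\]
combined with $\phi+s\in[1-m_0,2)$ and \eqref{eq4.2} (which makes $f'(\phi+s)\le 0$ on this range) gives $f(\phi+\hat\phi)-f(\phi)\le 0$; together with $f(\hat\phi)\ge 0$, the right-hand side of the reduced inequality is nonpositive and the inequality is trivial. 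The subregion $\xi,\hat\xi\le 0$ is symmetric, using \eqref{eq4.4} and the role swap $\phi\leftrightarrow\hat\phi$.

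\textbf{Transition region and the choice of $N^*$.} The remaining subregion $\xi\le 0\le\hat\xi$ is delicate because neither wave is saturated but both are exponentially small. Using $f\in C^{2}$ with $f(0)=f'(0)=0$, I would write
\[
f(\phi+\hat\phi)-f(\phi)-f(\hat\phi)=\int_0^{\phi}\int_0^{\hat\phi}f''(s+r)\,dr\,ds,
\]
so that $|f(\phi+\hat\phi)-f(\phi)-f(\hat\phi)|\le\|f''\|_\infty\phi(\xi)\hat\phi(\hat\xi)$. On the left, \eqref{eq4.1} yields $\phi'(\xi)-\hat\phi'(\hat\xi)\ge\min\{k,\hat k\}\bigl(\phi(\xi)+\hat\phi(\hat\xi)\bigr)$. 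Combining the upper bounds $\phi(\xi)\le A_{1}e^{\mu_{1}\xi}$ and $\hat\phi(\hat\xi)\le\hat A_{1}e^{-\hat\mu_{1}\hat\xi}$ from Theorem \ref{th2} with $\sigma=\min\{\mu_{1},\hat\mu_{1}\}$ and the identity $\xi-\hat\xi=2p(t)$ will give
\[
\frac{\phi(\xi)\hat\phi(\hat\xi)}{\phi(\xi)+\hat\phi(\hat\xi)}\ \le\ \sqrt{\phi(\xi)\hat\phi(\hat\xi)}\ \le\ \sqrt{A_{1}\hat A_{1}}\,e^{\sigma p(t)}.
\]
The reduced inequality will then follow for every $N\ge N^{*}:=\|f''\|_\infty\sqrt{A_{1}\hat A_{1}}/\min\{k,\hat k\}$, the common factor $e^{\sigma p(t)}$ cancelling on both sides.

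\textbf{Main obstacle.} The transition region is the real difficulty: the cross term $f(\phi+\hat\phi)-f(\phi)-f(\hat\phi)$ can be strictly positive precisely when $\phi+\hat\phi$ exceeds $\rho$ while both $\phi,\hat\phi$ lie below $\rho$, so it cannot simply be discarded. The geometric-mean step that extracts $e^{\sigma p(t)}$ from $\sqrt{\phi(\xi)\hat\phi(\hat\xi)}$ and exactly matches the prefactor $Ne^{\sigma p(t)}$ on the left is where the exponential lower bound \eqref{eq4.1} and the two-sided tail estimates of Theorem \ref{th2} are simultaneously essential; the asymmetry of $J$ only affects the signs of $c$, $\hat c$, and $\bar c$, not the mechanics of these estimates.
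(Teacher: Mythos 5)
Your proof is correct, and its skeleton coincides with the paper's: both of you arrive at the reduction $\mathcal{L}(\bar u)=Ne^{\sigma p(t)}\bigl(\phi'(\xi)-\hat\phi'(\hat\xi)\bigr)-\bigl[f(\phi+\hat\phi)-f(\phi)-f(\hat\phi)\bigr]$ with $\xi=x+\bar{c}t+p(t)$, $\hat\xi=x+\bar{c}t-p(t)$, and both split $\mathbb{R}$ into the same three regions according to the signs of $\xi$ and $\hat\xi$ (equivalently, the position of $x+\bar{c}t$ relative to $\pm p(t)$). Where you genuinely deviate is in the region-wise estimates. In the two outer regions $\{0\le\xi\le\hat\xi\}$ and $\{\xi\le\hat\xi\le0\}$ the paper still produces a quantitative bound, namely $F(x,t)\le \frac{LA_1}{k}e^{\mu_1 p(t)}$ and $F(x,t)\le\frac{L\hat A_1}{\hat k}e^{\hat\mu_1 p(t)}$ via $f(\phi+\hat\phi)-f(\phi)-f(\hat\phi)\le L\phi^{2}$, resp.\ $L\hat\phi^{2}$, with $L=\max_{s\in[0,2]}f''(s)$, so these regions feed into $N^{*}$; you instead observe from \eqref{eq4.2} together with \eqref{eq4.3}--\eqref{eq4.4} and $f\ge0$ on $[0,1]$ that the cross term is already nonpositive there, making those regions free and shrinking the definition of $N^{*}$. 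In the transition region $\xi\le0\le\hat\xi$ the paper subdivides again at $x+\bar{c}t=0$ and in each half trades one factor for its exponential tail while invoking \eqref{eq4.1} on the other; you avoid the subdivision by lower-bounding $\phi'-\hat\phi'\ge\min\{k,\hat k\}(\phi+\hat\phi)$, using AM--GM in the form $\phi\hat\phi/(\phi+\hat\phi)\le\sqrt{\phi\hat\phi}$, and factoring out $e^{\sigma p(t)}$ from $\sqrt{\phi\hat\phi}$ via $\xi-\hat\xi=2p(t)$ and $\sigma=\min\{\mu_1,\hat\mu_1\}$, which handles the whole region at once. Both routes deliver the result with the same ingredients (Theorem \ref{th2}, \eqref{eq4.1}, \eqref{eq4.2}--\eqref{eq4.4}); yours is a bit tidier, while the paper's quantitative bounds in the outer regions are uniform with the inner one.
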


\begin{proof}
Define
\begin{equation*}
\mathcal{L}(u)(x,t):=u_{t}(x,t)-(J*u-u)(x,t)-f(u(x,t)).
\end{equation*}
The remain work is to verify $\mathcal{L}(\bar{u})(x,t)\geq0$ for $(x,t)\in\mathbb{R}\times(-\infty,0]$. For simplification, we write $\phi(x+\bar{c}t+p(t))$ and $\hat{\phi}(x+\bar{c}t-p(t))$ as $\phi$ and $\hat{\phi}$, respectively. Direct calculations show that
\begin{align}\label{eq4.7}
\mathcal{L}(\bar{u})&=(\bar{c}+p')\phi'+(\bar{c}-p')\hat{\phi}'-(J*\phi-\phi)-(J*\hat{\phi}-\hat{\phi})
-f(\phi+\hat{\phi})\nonumber\\
&=(\bar{c}+p'-c)\phi'+(\bar{c}-p'-\hat{c})\hat{\phi}'-[f(\phi+\hat{\phi})-f(\phi)-f(\hat{\phi})]\nonumber\\
&=(\phi'-\hat{\phi}')[Ne^{\sigma p}-F(x,t)]
\end{align}
where
$$
F(x,t)=\frac{f(\phi+\hat{\phi})-f(\phi)-f(\hat{\phi})}{\phi'-\hat{\phi}'}.
$$
Next, we study it by dividing $\mathbb{R}$ into three regions.

(i) $p(t)\leq x+\bar{c}t\leq-p(t)$. Then $x+\bar{c}t+p(t)\leq0$ and $x+\bar{c}t-p(t)\geq0$ for $t\leq0$.
Recalling that $\phi,\hat{\phi}\in[0,1]$, $f(0)=0$ and $f\in C^{2}(\mathbb{R})$, one has
$$
f(\phi+\hat{\phi})-f(\phi)-f(\hat{\phi})=\int_{0}^{1}f'(\phi+s\hat{\phi})\hat{\phi}ds
-\int_{0}^{1}f'(s\hat{\phi})\hat{\phi}ds\leq L\phi\hat{\phi},
$$
with $L:=\underset{s\in[0,2]}\max f''(s)$. From Theorem \ref{th2} and \eqref{eq4.1}, for $0\leq x+\bar{c}t\leq-p(t)$, we obtain
\begin{align}\label{eq4.8}
F(x,t)&\leq\frac{L\phi\hat{\phi}}{\phi'-\hat{\phi}'}
\leq\frac{L\hat{\phi}}{\phi'/\phi}\leq\frac{L\hat{A}_{1}e^{-\hat{\mu}_{1}(x+\bar{c}t-p(t))}}{k}
\leq \frac{L\hat{A}_{1}}{k}e^{\hat{\mu}_{1}p(t)}.
\end{align}
Similarly, for $p(t)\leq x+\bar{c}t\leq0$, we get
\begin{align}\label{eq4.9}
F(x,t)\leq\frac{L\phi}{-\hat{\phi}'/\hat{\phi}}\leq\frac{LA_{1}e^{\mu_{1}(x+\bar{c}t+p(t))}}
{\hat{k}}\leq \frac{LA_{1}}{\hat{k}}e^{\mu_{1}p(t)}.
\end{align}

(ii) $-\infty<x+\bar{c}t\leq p(t)$. Combining \eqref{eq4.2} and \eqref{eq4.4} with $x+\bar{c}t-p(t)\leq0$ for $t\leq0$, one has
\begin{align*}
f(\phi+\hat{\phi})-f(\phi)-f(\hat{\phi})&=\int_{0}^{1}f'(\hat{\phi}+s\phi)\phi ds
-\int_{0}^{1}f'(s\phi)\phi ds\\
&\leq\phi\int_{0}^{1}|f'(0)-f'(s\phi)| ds
\leq L\phi^{2}.
\end{align*}
Then
\begin{align}\label{eq4.10}
F(x,t)\leq\frac{L\phi^{2}}{\phi'-\hat{\phi}'}
\leq\frac{L\phi}{\phi'/\phi}
\leq\frac{LA_{1}e^{\mu_{1}(x+\bar{c}t+p(t))}}{k}\leq\frac{LA_{1}}{k}e^{\mu_{1}p(t)}.
\end{align}

(iii) $-p(t)\leq x+\bar{c}t\leq+\infty$. Similarly, from \eqref{eq4.2}, \eqref{eq4.3} and $x+\bar{c}t+p(t)\geq0$, we have $f(\phi+\hat{\phi})-f(\phi)-f(\hat{\phi})\leq L\hat{\phi}^{2}$ and
\begin{align}\label{eq4.11}
F(x,t)\leq\frac{L\hat{\phi}^{2}}{\phi'-\hat{\phi}'}\leq\frac{L\hat{\phi}}{-\hat{\phi}'/\hat{\phi}}
\leq\frac{L\hat{A}_{1}e^{-\hat{\mu}_{1}(x+\bar{c}t-p(t))}}{\hat{k}}\leq\frac{L\hat{A}_{1}}{\hat{k}}e^{\hat{\mu}_{1}p(t)}.
\end{align}
Now by taking
$$
N\geq N^{*}:= \max\left\{\frac{L\hat{A}_{1}}{k},\frac{LA_{1}}{\hat{k}},\frac{LA_{1}}{k},\frac{L\hat{A}_{1}}{\hat{k}}\right\}, ~~~ \sigma:=\min\left\{\mu_{1},\hat{\mu}_{1}\right\}
$$
and combining \eqref{eq4.8}, \eqref{eq4.9}, \eqref{eq4.10}, \eqref{eq4.11} with \eqref{eq4.7}, we conclude that
\begin{equation*}
\mathcal{L}(\bar{u})=(\phi'-\hat{\phi}')(Ne^{\sigma p}-F(x,t))\geq0.
\end{equation*}
The proof is complete.
\end{proof}

\begin{proof}[\textbf{Proof of Theorem \ref{th3}}]
For any $n\in \mathbb{N}$, consider the following Cauchy problem
\begin{equation}\label{eq4.12}
\begin{cases}
\frac{\partial u_{n}}{\partial t}(x,t)=(J*u_{n})(x,t)-u_{n}(x,t)+f(u_{n}(x,t)),~~~(x,t)\in\mathbb{R}\times(-n,+\infty),\\
u_{n}(x,-n):=\underline{u}(x,-n)=\max\{\phi(x-cn+\omega),\hat{\phi}(x-\hat{c}n-\omega)\},~~~x\in\mathbb{R}.
\end{cases}
\end{equation}
Lemma \ref{lem2.2} shows that \eqref{eq4.12} has a unique classical solution $u_{n}(x,t)$ satisfying
\begin{equation}\label{eq4.14}
\underline{u}(x,t)=\max\{\phi(x+ct+\omega),\hat{\phi}(x+\hat{c}t-\omega)\}\leq u_{n}(x,t)\leq1
\end{equation}
for any $(x,t)\in\mathbb{R}\times[-n,+\infty)$ and $n\in\mathbb{N}$.
Furthermore,
$$
u_{n}(x,-n)=\max\{\phi(x-cn+\omega),\hat{\phi}(x-\hat{c}n-\omega)\}\leq
\phi(x-\bar{c}n+p(-n))+\hat{\phi}(x-\bar{c}n-p(-n))
$$
for any $x\in\mathbb{R}$. Then it follows from Lemma \ref{lem2.2} and Lemma \ref{lem4.1} that $u_{n}(x,t)\leq \bar{u}(x,t)$ for any $n\in\mathbb{N}$ and $(x,t)\in\mathbb{R}\times[-n,0]$. Moreover, $|\phi'|\leq\frac{2+M}{|c|}$ and $|\hat{\phi}'|\leq\frac{2+M}{|\hat{c}|}$ with $M:=\max_{u\in[0,1]}f(u)$, then according to Lemma \ref{lem2.3} and Arzela-Ascoli Theorem, there exists a function $u(x,t)$ and a subsequence $\{u_{n_{i}}(x,t)\}$ of $\{u_{n}(x,t)\}$ such that $u_{n_{i}}(x,t)$ and $\frac{\partial }{\partial t}u_{n_{i}}(x,t)$ converge uniformly in any compact set $S\subset\mathbb{R}^{2}$ to $u(x,t)$ and $\frac{\partial}{\partial t}u(x,t)$ as $i\rightarrow+\infty$, respectively. From the equation satisfied by $u_{n_{i}}(x,t)$, it is clearly that $u(x,t)$ is an entire solution of \eqref{101} and satisfies
\begin{align*}
&\underline{u}(x,t)\leq u(x,t)\leq\bar{u}(x,t) \text{~~~for~~}(x,t)\in\mathbb{R}\times(-\infty,0].\\
&\underline{u}(x,t)\leq u(x,t)\leq 1 \text{~~~for~~}(x,t)\in\mathbb{R}^{2}.
\end{align*}
Moreover, by using Theorem \ref{th2} and \eqref{eq4.6}, for $x\geq-\bar{c}t$ and $t\leq0$, one has
\begin{align*}
&u(x,t)-\phi(x+ct+\omega)\\
\leq\,&\phi(x+\bar{c}t+p(t))+\hat{\phi}(x+\bar{c}t-p(t))-\phi(x+ct+\omega)\\
\leq\,&\sup_{\xi\in\mathbb{R}}|\phi'(\xi)|(p(t)-c_{0}t-\omega)+\hat{A}_{1}e^{-\hat{\mu}_{1}(x+\bar{c}t-p(t))}\\
\leq\,& K\sup_{\xi\in\mathbb{R}}|\phi'(\xi)|e^{c_{0}\sigma t}+\hat{A}_{1}e^{\hat{\mu}_{1}p(t)}.
\end{align*}
Similarly, for $x\leq-\bar{c}t$ and $t\leq0$, one has
$$
u(x,t)-\hat{\phi}(x+\hat{c}t-\omega)\leq K\sup_{\hat{\xi}\in\mathbb{R}}|\hat{\phi}'(\hat{\xi})|e^{c_{0}\sigma t}+A_{1}e^{\mu_{1}p(t)}.
$$
Then \eqref{eq4.6} implies \eqref{105} of Theorem \ref{th3}. The inequalities \eqref{106} can be proved directly by Lemma \ref{lem2.3}.

Next we prove the entire solution $u(x,t)$ is increasing with respect to $\omega$.
Note that the traveling waves $\phi$ is nondecreasing and $\hat{\phi}$ is nonincreasing, it follows that the functions $u_{n}(x,-n)$ are nondecreasing in $\omega$ for any $n\in\mathbb{N}$, when the other parameters being fixed. Then $u_{n}(x,t)$, even $u(x,t)$ is nondecreasing in $\omega$. Furthermore, they are increasing in $\omega$ from the strong maximum principle established in \cite{Coville2}.

The entire solution established above is only for the case $\theta=\omega$ with $\omega$ defined by \eqref{406}.
For more general $\theta\in\mathbb{R}$, we define $\tilde{u}(x,t)=u(x+x_{0},t+t_{0})$ with
$$
x_{0}=\frac{(c+\hat{c})(\omega-\theta)}{c-\hat{c}},~~~t_{0}=\frac{2(\theta-\omega)}{c-\hat{c}}.
$$
Denote $\tilde{u}(x,t)$ by $u(x,t)$, then $u(x,t)$ is the entire solution we desired. The rest of the proof is straightforward and mainly depends on the properties of the subsolution $\underline{u}(x,t)$ defined in \eqref{eq4.14}, thus we omit the details.
The proof is complete.
\end{proof}

\section{Discussion}
\noindent

In this paper, we have obtained a new entire solution of nonlocal dispersal equation \eqref{101} with asymmetric kernel function and ignition nonlinearity, but we only consider the interactions of the traveling wave solutions with non-zero speeds. However, from the discussion of Section 2, we know that there are special dispersal kernel functions and nonlinearities can make $c=0$ or $\hat{c}=0$, when at least one of $c$ and $\hat{c}$ is equal to zero, what will occur? If there are some new entire solutions? We guess that there might exist some entire solution that comes from the interactions of two steady state waves or one traveling wave with non-zero speed and one steady state wave, while it seems very different and difficult to mathematically prove this, we leave it as a further investigation.

\section*{Acknowledgments}

\noindent

We are very grateful to the referees for their valuable comments. The second author is partially supported by the NSF of China (11671180) and the Fundamental Research Funds for the Central Universities (lzujbky-2016-ct12) and the third author is partially supported by the NSF of China (11371179).


\begin{thebibliography}{99}

\bibitem{Coville1}
Coville J, D\'{a}vila J, Mart\'{\i}nez S.
Nonlocal anisotropic dispersal with monostable nonlinearity.
{J. Differential Equations}, 2008, 244: 3080--3118%

\bibitem{Coville2}
Coville J. Maximum principles, sliding techniques and applications to nonlocal equations.
{Electron. J. Differential Equations}, 2007, 68: 1--23%

\bibitem{Coville3}
Coville J. Traveling fronts in asymmetric nonlocal reaction diffusion equation: The bistable and ignition case. Pr\'{e}publication du CMM, Hal-00696208, 2012

\bibitem{Coville4}
Coville J. Travelling waves in a nonlocal reaction diffusion equation with ignition nonlinearity [Ph.D. Thesis]. Paris: Universit'e Pierre et Marie Curie, 2003

\bibitem{Crooks}
Crooks E C M , Tsai J C.
Front-like entire solutions for equations with convection.
{J. Differential Equations}, 2012, 253: 1206--1249%

\bibitem{GM}
Guo J S, Morita Y.
Entire solutions of reaction-diffusion equations and an application to discrete diffusive equations.
{Discrete Contin. Dyn. Syst}, 2005, 12: 193--212%

\bibitem{Hamel}
Hamel F, Nadirashvili N.
Entire solution of the KPP eqution.
{Comm. Pure Appl. Math}, 1999, 52: 1255--1276%

\bibitem{liu1}
Li W T, Liu N W, Wang Z C.
Entire solutions in reaction-advection-diffusion equations in cylinders.
{J. Math. Pures Appl}, 2008, 90: 492--504%

\bibitem{LSW}
Li W T, Sun Y J, Wang Z C.
Entire solutions in the Fisher-KPP equation with nonlocal dispersal.
{Nonlinear Anal. Real World Appl}, 2010, 11: 2302--2313%

\bibitem{li2008}
Li W T, Wang Z C, Wu J H. Entire solutions in monostable reaction-diffusion equations with delayed nonlinearity. {J. Differential Equations}, 2008, 245: 102--129%


\bibitem{LZZ}
Li W T, Zhang L, Zhang G B.
Invasion entire solutions in a competition system with nonlocal dispersal.
{Discrete Contin. Dyn. Syst}, 2015, 35: 1531--1560%


\bibitem{MN}
Morita Y, Ninomiya H.
Entire solutions with merging fronts to reaction-diffusion equations.
{J. Dynam. Differential Equations}, 2006, 18: 841--861%


\bibitem{Pan2009}
Pan S X, Li W T, Lin G. Travelling wave fronts in nonlocal delayed reaction-diffusion systems and applications. {Z. Angew. Math. Phys}, 2009, 60: 377--392%

\bibitem{SLW}
Sun Y J, Li W T, Wang Z C.
Entire solutions in nonlocal dispersal equations with bistable nonlinearity.
{J. Differential Equations}, 2011, 251: 551--581%

\bibitem{SLW2}
Sun Y J, Li W T, Wang Z C.
Traveling waves for a nonlocal anisotropic dispersal equation with
monostable nonnlinearity. {Nonlinear Anal}, 2011, 74: 814--826%

\bibitem{SZLW}
Sun Y J, Zhang L, Li W T, Wang Z C.
Entire solutions in nonlocal monostable equations: asymmetric case. Submitted, 2016

\bibitem{wang2010}
Wang M X, Lv G Y. Entire solutions of a diffusive and competitive Lotka-Volterra type system with nonlocal delays. {Nonlinearity}, 2010, 23: 1609--1630%

\bibitem{WLR2016}Wang Z C, Li W T, Ruan S G, Existence, uniqueness and stability of pyramidal traveling fronts in reaction-diffusion systems. {Sci China Math}, 2016, 59: 1869-1908%

\bibitem{WR}
Wu S L, Ruan S G.
Entire solutions for nonlocal dispersal equations with spatio-temporal delay: Monostable case. {J. Differential Equations}, 2015, 258: 2435--2470%


\bibitem{Ya2}
Yagisita H. Existence and nonexistence of traveling waves for a nonlocal monostable equation. {Publ. Res. Inst. Math. Sci}, 2009, 45: 925--953%

\bibitem{ZLW}
Zhang L, Li W T, Wu S L. Multi-type entire solutions in a nonlocal
dispersal epidemic model. {J. Dynam. Differential Equations}, 2016, 28: 189--224%

\bibitem{ZLWS}
Zhang L, Li W T, Wang Z C, Sun Y J. Entire solutions in nonlocal bistable equations: asymmetric case. Submitted, 2016

\bibitem{ZGB}
Zhang G B, Li W T, Wang Z C.
Spreading speeds and traveling waves for nonlocal dispersal equations with degenerate monostable nonlinearity.
{J. Differential Equations}, 2012, 252: 5096--5124%


\end{thebibliography}
\end{document}